\newtheorem{theorem}{Theorem}[section]
\theoremstyle{definition}
\newtheorem{remark}{Remark}
\tikzset{join/.code=\tikzset{after node path={%
\ifx\tikzchainprevious\pgfutil@empty\else(\tikzchainprevious)%
edge[every join]#1(\tikzchaincurrent)\fi}}}
\tikzset{>=stealth',every on chain/.append style={join}, every join/.style={->}}
\tikzstyle{labeled}=[execute at begin node=$\scriptstyle,   execute at end node=$]
\newcommand {\Int}   {\int\limits}
\newcommand {\Sum}   {\sum\limits}
\newcommand {\IntO}  {\Int_\Omega}
\newcommand {\CF}    {C_{\mathrm{F\Omega}}}
\newcommand {\Ctr}   {C_{\mathrm{tr}}}
\newcommand {\CPi}   {C_{\mathrm{P\Omega}_i}}
\newcommand {\Rd}    {{\mathds{R}}^d}
\newcommand {\R}     {\mathscr{R}}
\newcommand {\I}     {\mathscr{I}}
\def \NormA#1  {{\mid\!\mid\!\mid\! #1 \!\mid\!\mid\!\mid}^2_{A} }   
\def \NormAinverse#1  { {\mid\!\mid\!\mid\! #1 \!\mid\!\mid\!\mid}^2_{A^{-1}} }   
\def \NormQT#1 {{\mid\!\mid\!\mid\! #1 \!\mid\!\mid\!\mid}^2_{Q_T}}   
\def \NormQO#1 {{\mid\!\mid\!\mid #1 \mid\!\mid\!\mid}^2_{Q^0}}   
\def \NormQk#1 {{\mid\!\mid\!\mid #1 \mid\!\mid\!\mid}^2_{Q^k}}   
\def \Normf#1  {\Big \lceil #1 \Big\rceil_\Omega}
\def \dvrg     {\mathrm{div} \:}	
\def \dt       {\mathrm{\:d}t}
\def \dx       {\mathrm{\:d}x}
\def \dxt      {\mathrm{\:d}x\mathrm{d}t}
\def \dst      {\mathrm{\:d}s\mathrm{d}t}
\def \Normt#1  {\mid\!\mid\!\mid #1 \mid\!\mid\!\mid}   
\def \Mean#1#2 {{ \Big \{ \big| #1 \big| \Big\} }_{#2}}
\def \smallMean#1#2 {{ \big \{ | #1 | \big\} }_{#2}}
\def\L#1{L^{#1}}
\def\H#1{H^{#1}}
\def\Ho#1{{\mathring{H}}^{#1}}
\def\majtwo{\overline{\mathrm M}^2_{\mathrm{II}}}
\def\majtwod{\overline{\mathrm M}^2_{\mathrm{II}, \mathrm{N}}}
\def\maj{{\overline{\mathrm M}^2_{\,\mathrm{I}}}}
\def\majmuzero{{\overline{\mathrm M}^2_{\,\mathrm{I},\, \mu = \mathrm{0}}}}
\def\majmuone{{\overline{\mathrm M}^2_{\,\mathrm{I},\, \mu = \mathrm{1}}}}
\def\majd{{\overline{\mathrm M}^2_{\,\mathrm{I}, \mathrm{N}}}}
\def\Min{{\underline{\mathrm M}^2}}
\def\error{{[e]\,}^2}
\title[]
{A posteriori error estimates for time-dependent reaction-diffusion
problems based on the Payne--Weinberger inequality}
\author[S. Matculevich and P. Neittaanm{\"a}ki and S. Repin]
{Svetlana Matculevich and Pekka Neittaanm{\"a}ki and Sergey Repin}	
\subjclass{Primary: 58F15, 58F17; Secondary: 53C35.}
\keywords{Parabolic equations, a posteriori estimates, Poincare type estimates.}
\email{svetlana.v.matculevich@jyu.fi}
\email{pekka.neittaanmaki@jyu.fi}
\email{repin@pdmi.ras.ru; serepin@jyu.fi}
\begin{document}


\centerline{\scshape Svetlana Matculevich}
\medskip
{\footnotesize
 \centerline{Dept. of Mathematical Information Technology, Faculty of Information Technology}
 \centerline{C321.4, Agora, P.O. Box 35, FI-40014, University of Jyv{\"a}skyl{\"a}, Finland}
} 
\medskip

\centerline{\scshape Pekka Neittaanm{\"a}ki}
\medskip
{\footnotesize
 \centerline{Dept. of Mathematical Information Technology, Faculty of Information Technology}
 \centerline{Agora, P.O. Box 35, FI-40014, University of Jyv{\"a}skyl{\"a}, Finland}
}
\medskip

\centerline{\scshape Sergey Repin}
{\footnotesize
 \centerline{Dept. of Mathematical Information Technology, Faculty of Information Technology}
 \centerline{Agora, P.O. Box 35, FI-40014, University of Jyv{\"a}skyl{\"a}, Finland}
 \vskip 5pt 
 \centerline{V.A. Steklov Institute of Mathematics at St. Petersburg}
 \centerline{191011, Fontanka 27, St.Petersburg, Russia}
}

\bigskip	


\begin{abstract}
We consider evolutionary reaction-diffusion problem with mixed Dirichlet--Robin 
boundary conditions. For this class of problems, we derive two-sided estimates of the 
distance between any function in the admissible energy space and exact solution of the 
problem. The estimates (majorants and minorants) are explicitly computable and do not
contain unknown functions or constants. Moreover, it is proved that the estimates are 
equivalent to the energy norm of the deviation from the exact solution. 	
\end{abstract}
\maketitle


\section{Problem statement}

Let $\Omega \in \Rd$ be a bounded connected domain with Lipchitz continuous boundary 
$\partial \Omega$, which consists of two measurable non-intersecting parts $\Gamma_D$ 
and $\Gamma_R$ associated with the Dirichlet and Robin boundary conditions, respectively. 
By $Q_T$ we denote the space-time cylinder $Q_T := \Omega \times (0, T)$, $T > 0$, and
$S_T := \partial\Omega \times [0, T] = \big(\Gamma_D \cup \Gamma_R \big) \times [0, T]$. 
The parts of $S_T$ related to $\Gamma_D$ and $\Gamma_R$ are denoted by $S_D$ and $S_R$, 
respectively. 


We consider the classical reaction-diffusion initial boundary value problem: find 
$u(x, t)$ and $p(x, t)$ such that 
\begin{alignat}{3}
   u_t - \nabla \cdot p + \lambda u & =\, f,	      & \quad (x, t) \in Q_T,\label{eq:parabolic-problem-equation}\\
                                  p & =\, A \nabla u, & \quad (x, t) \in Q_T,\nonumber\\
                      	    u(x, 0) & =\, \varphi,    & \quad x \in \Omega,\label{eq:parabolic-problem-initial-condition}\\
                        	  u & =\, 0,	      & \quad (x, t) \in S_D,\label{eq:parabolic-problem-dirichlet-boundary-condition}\\
   			       p \cdot n + \sigma u & =\, g,					& \quad (x, t) \in S_R,\label{eq:parabolic-problem-robin-boundary-condition}
\end{alignat}
where $n$ denotes the vector of unit outward normal to $\partial\Omega$, and
\begin{equation}
f(x, t) \in \L{2}(Q_T), \quad 
\varphi(x) \in \L{2}(\Omega), \quad
g(x, t) \in \L{2}\left(0, T; \L{2} (S_R)\right).
\label{eq:problem-condition}
\end{equation}

The function $\lambda$ entering the reaction part of 
(\ref{eq:parabolic-problem-equation}) is a non-negative bounded function, which values 
may vary from very small (or zero) to large values in different parts of the domain. 
The function $\sigma(s, t)$ is a bounded function defined on $\Gamma_R$. 
We assume that for any 
$(x, t) \in Q_T$ the matrix $A$ is symmetric and satisfies the condition
\begin{equation}
\nu_1 |\xi|^2 \leq A(x, t) \: \xi \cdot \xi \leq \nu_2 |\xi|^2,\quad
\xi \in \Rd,\quad 0 < \nu_1 \leq \nu_2 < \infty.
\label{eq:operator-a}
\end{equation}
%
%
By $\| \cdot \|_\Omega$ and $\| \cdot \|_{Q_T}$, we denote the standard norms in 
$\L{2}(\Omega)$ and 
$\L{2}(Q_T)$, respectively. $\L{2, 1} (Q_T)$ is the space of functions $g(x, t)$ 
with the finite norm 
$\Int_0^T \| g(\cdot, t)\|_{\Omega} \dt$, 
$\Ho{1}(Q_T)$ is a subspace of $\H{1}(Q_T)$, which contains functions satisfying 
(\ref{eq:parabolic-problem-dirichlet-boundary-condition}), \linebreak
$\H{1, 0} (Q_T) := \L{2}\left( 0, T; \H{1}(\Omega) \right)$, and
$V_2(Q_T) $  := 
$\H{1, 0}(Q_T) \cap \L{\infty}\left( 0, T; \L{2}(\Omega) \right)$.
%
%
The space $V^{1, 0}_2 (Q_T) := \H{1, 0} (Q_T) \cap C \left(0, T; \L{2}(\Omega)\right)$ 
is a subspace of $V_2(Q_T)$ with functions possessing $\L{2}$-traces defined for 
a.a. $t \in [0, T]$. 

The generalized solution of (\ref{eq:parabolic-problem-equation})--(\ref{eq:parabolic-problem-robin-boundary-condition}) 
is defined as a function $u(x, t) \in V^{1, 0}_2 (Q_T)$, satisfying 
the integral identity
\begin{multline}
	\Int_{\Omega} \Big( u(x, T) \eta(x, T) - u(x, 0) \eta(x, 0) \Big) \dx - 
	\Int_{Q_T} u \eta_t \dxt + 
        \Int_{Q_T} A \nabla{u} \cdot \nabla{\eta} \dxt \, + \\ 
	\Int_{S_R} \sigma u \eta \dst + \Int_{Q_T} \lambda u \eta \dxt = 
	\Int_{Q_T} f \eta \dxt + \Int_{S_R} g \eta \dst, \quad \forall \eta \in \Ho{1}(Q_T).
	\label{eq:generalized-statement}
\end{multline}
%
Classical solvability results (see, e.g., 
\cite{Ladyzhenskaya1985, Ladyzhenskayaetall1967, Evans2010}) guarantee that $u$
exists and is unique in $V^{1, 0}_2(Q_T)$.


Assume that $v \in \Ho{1}(Q_T)$ is an approximation of $u$. Our goal is to deduce explicitly 
computable and realistic estimates of the distance between $u$ and $v$. 
In other words, we wish to quantify neighborhoods of the exact solution in terms of local
topology equivalent to the natural energy norm. More precisely, we introduce the measure 
\begin{multline}	
	[ u - v ]^2_{(\nu, \theta, \zeta, \chi)} = 
	\nu\,    \NormA{\, \nabla (u - v)} \: + \: 
           \| \,\theta \, (u - v) \, \|^2_{Q_T} + \: \\
	\zeta \, \| \, (u - v)(\cdot, T) \, \|^2_{\Omega} + \: 
  \chi     \| \sqrt{\sigma}(u - v)\|^2_{S_R},
	\label{eq:energy-norm-for-reaction-diff-evolutionary-problem}
\end{multline}
where $\nu$, $\theta$, $\zeta$ and $\chi$ are certain positive weights (balancing 
different components of the error). They can be selected in different ways so that 
(\ref{eq:energy-norm-for-reaction-diff-evolutionary-problem}) presents a collection 
of different error measures. Here, 
\begin{equation}
\NormA{\tau} :=  \Int_{Q_T} A \tau \cdot \tau \dxt,
\end{equation}
henceforth, we also use the norms
\begin{equation*}
  \| \: \tau \: \|^2_A := \Int_\Omega A \tau \cdot \tau \dx, \quad 	
   \| \: \tau \: \|^2_{A^{-1}} := \Int_\Omega A^{-1} \tau \cdot \tau \dx, \quad
	\NormAinverse{\tau} := \Int_{Q_T} A^{-1} \tau \cdot \tau \dxt. 
\end{equation*}
%
In Theorem \ref{th:theorem-minimum-of-majorant-I}, we derive a fully computable and 
guaranteed upper bound of $e = u - v$ (for this purpose we use the method originally 
introduced in \cite{Repin2002}). In \cite{RepinTomar2010}, this method was applied to 
problems with convection, and in \cite{NeittaanmakiRepin2010} guaranteed error majorants 
were derived for the Stokes problem. In Section \ref{sec:majorant-I}, we combine this 
approach with the technique suggested in \cite{RepinSauter2006} for the stationary 
reaction-diffusion problem, which yields efficient bounds of the distance to the exact 
solution (error majorants) for problems with strongly changing reaction function. 

The majorant presented in Theorem 
\ref{th:theorem-minimum-of-majorant-I} contains the 
constant $\CF$ in the \linebreak Friedrichs type inequality (\ref{eq:friedrichs}). 
If $S_T = S_D$, then this constant (or a guaranteed upper bound of it) is easy to find. 
However, in the case of mixed boundary conditions and complicated domains, finding 
$\CF$ may cause a serious problem. Therefore, in
Theorems \ref{th:theorem-majorant-for-decomposed-domain-1} and 
\ref{th:theorem-majorant-for-decomposed-domain-2}, we derive another upper bounds, 
which are based on decomposition of $\Omega$ into a collection of non-overlapping convex 
sub-domains. By means of a technique close to that has been used in 
\cite{RepinDeGruyter2008} for elliptic problem, we deduce majorants, which involve only
constants in the Poincare type inequalities. For convex domains these constants are 
easy to estimate due to the well known result of Payne and Weinberger 
\cite{PayneWeinberger1960} (with correction of Bebendorf \cite{Bebendorf2003a}). 
Therefore, we obtain a fully computable error majorant (\ref{eq:majorant-1}), which 
involves only known data and constants. In Subsection 
\ref{ssc:combined-norm}, we prove that it is equivalent 
to the distance to the exact solution measured in terms of the combined (primal-dual) 
norm.
  
An advanced form of the majorant (which is 
sharper than those in Theorems \ref{th:theorem-minimum-of-majorant-I},  
\ref{th:theorem-majorant-for-decomposed-domain-1}, and 
\ref{th:theorem-majorant-for-decomposed-domain-2} but has a more complicated structure)
is derived in Section \ref{sec:majorant-II}.
In Subsection \ref{ssc:equivalence-of-error-and-advanced-majorant}, 
it is shown that the advanced majorant is equivalent to the distance to the exact 
solution measured in terms of the primal energy norm.
A guaranteed and fully computable lower bound of the error is derived in Theorem 
\ref{th:theorem-mininum-of-minorant}. The minorant (\ref{eq:lower-estimate})
also contains only known data and can be computed directly. Finally, we note that 
the practical efficiency of estimates similar to those derived in this paper has been recently 
tested and confirmed in \cite{MatculevichRepin2013}. 


\section{Majorants of the deviation from \texorpdfstring{$\boldsymbol{u}$}.}
\label{sec:majorant-I}

In this section, we deduce the first (and the simplest) form of the functional, which
provides a guaranteed and fully computable upper bound of the deviation (error) 
$e = u - v$ for any function \linebreak 
$v\in \Ho{1}(Q_T)$ and the solution $u$. From 
(\ref{eq:generalized-statement}), it follows that
\begin{multline}
	\Int_{\Omega} \! \! \left( e(x, T) \eta(x, T) - e(x, 0) \eta(x, 0) \right) \! \dx - \!
	\Int_{Q_T} \! \! e \eta_t  \! \dxt \, + 
	\Int_{Q_T} \! \! A \nabla{e} \cdot \nabla{\eta}  \! \dxt \, + 
	\Int_{Q_T} \! \! \lambda e \eta \! \dxt \, + \qquad \\ 
	\Int_{S_R} \sigma e \eta \dst = 
	\Int_{Q_T} \! \left(f - v_t - \lambda v \right)\eta \dxt - 
	\Int_{Q_T} \! A \nabla{v} \cdot \nabla{\eta} \dxt + \Int_{S_R} (g - \sigma v) \eta \dst.
	\nonumber
\end{multline}
Since $e \in \Ho{1}(Q_T)$, we can set $\eta = e$, use the relation
\begin{equation}
	\Int_{\Omega} \left( e^2(x, T) - e^2(x, 0)\right) \dx - \Int_{Q_T} e e_t \dxt = 	
	\frac12 \left(\| e(\cdot, T) \|^2_{\Omega} - \| e(\cdot, 0) \|^2_{\Omega} \right),
	\label{eq:e-et-relation}
\end{equation}
and obtain
\begin{multline}
	\frac12 \| e(\cdot, T) \|^2_{\Omega} + \NormQT{ \nabla{e} } + 
	\Int_{Q_T} \!\! \lambda e^2 \dxt + \Int_{S_R} \! \sigma e^2 \dst = 
	\Int_{Q_T} \!\! \left(f - v_t - \lambda v \right) e \dxt - \\
	\Int_{Q_T} A \nabla{v} \cdot \nabla e \dxt + 
	\Int_{S_R} ( g - \sigma v ) e \dst + \frac12 \| e(\cdot	, 0) \|^2_{\Omega} \,.
	\label{eq:energy-balance-equation}
\end{multline}
This relation is a form of the `energy-balance' identity in terms of deviations. 
It plays an important role in subsequent analysis. Next, we introduce an additional 
variable $y \in Y^*_{\dvrg}(Q_T)$, where
\begin{equation}
	Y^*_{\dvrg}(Q_T) := 
	\bigg \{  \, y \in L_2(\Omega) \,\Big|\, \dvrg y \in L^2(\Omega), \;
	 y \cdot n \in L^2(\Gamma_R)\; \mbox{for a.a.} \; t \in (0, T) \bigg \}.
	\label{eq:y-set-div}
\end{equation} 	
%
%
\begin{theorem}
\label{th:theorem-minimum-of-majorant-I}
(i) For any $v \in \Ho{1}(Q_T)$ and $y \in Y^*_{\dvrg}(Q_T)$ the following inequality 
holds:
\begin{multline}
	(2 - \delta)\NormA{\nabla e} \! + 
	\left( \! 2 - \frac1\gamma \! \right) \left \| \sqrt{\lambda} e  \right \|^2_{Q_T}  + 
	\| e (\cdot, T) \|^2_{\Omega} \! + 2 \left \| \sqrt{\sigma} e  \right \|^2_{S_R} =: \\
	\error_{({\nu},\, {\theta},\, {\zeta}, \,  {\chi})} \leq 
	\maj (v, y; \delta, \gamma, \mu) \! := 
	\| e (\cdot, 0) \|^2_{\Omega}\, +  
	\qquad \qquad \qquad \qquad \\
	        \Int_0^T \! \! \Bigg ( \! \gamma \left\|\frac{\R_{f,\, \mu} (v, y)}{\sqrt{\lambda} \, } \right\|^2_{\Omega} \, + 
	                 \alpha_1(t) \frac{\CF^2}{\nu_1} \| \R_{f, 1 - \mu} (v, y) \|^2_{\Omega} + \\
									 \alpha_2(t) \| \R_d (v, y)\|^2_{A^{-1}} + 
									 \alpha_3(t) \frac{\Ctr^2}{\nu_1} \left\| \R_b (v, y) \right\|^2_{\Gamma_R} \Bigg ) \dt,
  \label{eq:majorant-1}
\end{multline}
where $\delta \in (0, 2]$, $\gamma \geq 1$, $\mu \in [0, 1]$,
\begin{alignat}{2}
\R_f  (v, y) &        := f - v_t - \lambda v + \dvrg y, \\
\R_{f,\,\mu} (v, y) & := \mu \, \R_f, \quad \R_{f, 1 - \mu}  (v, y) := (1 - \mu) \, \R_f, \label{eq:r-mu} \\
\R_d  (v, y) &        := y - A \nabla{v}, \label{eq:r-d} \\ 
\R_b  (v, y) &        := g - \sigma v - y \cdot n, 
\end{alignat}
$\CF$ is the constant in the Friedrichs' inequality
\begin{equation}
	\| \eta \|_{\Omega} \leq \CF \| \nabla \eta \|_{\Omega}, \qquad \forall \eta \in \Ho{1}(\Omega), 
	\label{eq:friedrichs}
\end{equation}
$\Ctr$ is the constant in the trace inequality related to the Robin part of the boundary
\begin{equation}
\| \eta \|_{\Gamma_R} \leq \Ctr \| \nabla \eta \|_{\Omega}, \qquad \forall \eta \in \Ho{1}(\Omega), 
\end{equation} 
${\nu} = 2 - \delta$, 
${\theta} = \sqrt{ \left( 2 - \frac1\gamma \right) \lambda }$, 
${\zeta} = 1$,
${\chi} = 2$
are positive weights, and $\alpha_1(t)$, $\alpha_2(t)$, $\alpha_3(t)$ are positive scalar-valued functions satisfying the relation
\begin{equation}
	\frac{1}{\alpha_1(t)} + \frac{1}{\alpha_2(t)} +
	\frac{1}{\alpha_3(t)} = \delta.
	\label{eq:alpha-relation}
\end{equation}

\noindent
(ii) For any $\delta \in (0, 2]$, $\gamma \geq 1$, and $\mu \in [0, 1]$, the lower bound
of the variation problem generated by the majorant
\begin{equation}
\inf\limits_{
\begin{array}{c}
v \in \Ho{1}(Q_T)\\
y \in Y^*_{\dvrg}(Q_T)
\end{array}
} \maj (v, y; \delta, \gamma, \mu)
\label{eq:inf-maj-I}
\end{equation}
is zero, and it is attained if and only if $v = u$ and $y = A \nabla u$.
\end{theorem}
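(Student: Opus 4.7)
The plan is to start from the energy-balance identity (\ref{eq:energy-balance-equation}) and to introduce the free flux $y\in Y^*_{\dvrg}(Q_T)$ via the integration-by-parts identity
\begin{equation*}
\Int_{Q_T} \bigl( (\dvrg y)\, e + y \cdot \nabla e \bigr) \dxt = \Int_{S_R} (y \cdot n)\, e \dst,
\end{equation*}
which holds because $e \in \Ho{1}(Q_T)$ vanishes on $\Gamma_D$. Adding this identity to the right-hand side of (\ref{eq:energy-balance-equation}) lets me regroup the right side into three residual contributions
\begin{equation*}
\Int_{Q_T} \R_f(v,y)\, e \dxt + \Int_{Q_T} \R_d(v,y)\cdot\nabla e \dxt + \Int_{S_R} \R_b(v,y)\, e \dst + \tfrac12 \| e(\cdot,0) \|^2_\Omega,
\end{equation*}
with $\R_f,\R_d,\R_b$ as defined in (\ref{eq:r-mu})--(\ref{eq:r-d}) and in the boundary residual of the theorem. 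Multiplying the whole identity by $2$ puts the left-hand side into the shape prescribed by the theorem, up to coefficients that will be tightened by the estimates below.

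Next, I would estimate each residual contribution so that parts of $\NormA{\nabla e}$ and of $\|\sqrt\lambda\, e\|^2_{Q_T}$ can be absorbed back into the left. The reaction residual is split as $\R_f = \R_{f,\mu} + \R_{f,1-\mu}$; the $\mu$-part is controlled by Cauchy--Schwarz in the $\sqrt\lambda$-weighted pairing and Young's inequality with parameter $\gamma \geq 1$, producing the term $\gamma\|\R_{f,\mu}/\sqrt\lambda\|^2$ and leaving $\tfrac{1}{\gamma}\|\sqrt\lambda\, e\|^2_{Q_T}$ for absorption, which yields the coefficient $2 - 1/\gamma$. The $(1-\mu)$-part is handled by the Friedrichs inequality (\ref{eq:friedrichs}) together with the lower ellipticity bound $\nu_1|\nabla e|^2 \leq A\nabla e\cdot\nabla e$ from (\ref{eq:operator-a}), combined with a Young-type splitting using a time-dependent weight $\alpha_1(t)$. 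The diffusion residual $2\Int_{Q_T}\R_d\cdot\nabla e \dxt$ is estimated by Cauchy--Schwarz in the $A$-pairing and Young's inequality with $\alpha_2(t)$, and the boundary residual $2\Int_{S_R}\R_b\, e \dst$ by the trace inequality and Young's with $\alpha_3(t)$. Each of the three estimates generates a contribution $\tfrac{1}{\alpha_i(t)}\NormA{\nabla e}$; summed they give exactly $\delta\NormA{\nabla e}$ by (\ref{eq:alpha-relation}), whose subtraction from the left yields the coefficient $2-\delta$ and completes the inequality.

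For part (ii), every term on the right of (\ref{eq:majorant-1}) is non-negative, hence $\maj(v,y;\delta,\gamma,\mu) \geq 0$ and the infimum is at least zero. Taking $v = u$ and $y = A\nabla u$ one has $e(\cdot, 0)=0$ and three vanishing residuals: $\R_d = A\nabla u - A\nabla u = 0$, $\R_b = g - \sigma u - A\nabla u\cdot n = 0$ by the Robin condition, and $\R_f = f - u_t - \lambda u + \dvrg(A\nabla u) = 0$ by the interior equation, so the majorant vanishes at this pair. Conversely, if $\maj(v,y;\delta,\gamma,\mu) = 0$, then the initial-data term forces $v(\cdot,0) = \varphi$, and the positivity of the integrands of the three residual norms forces $y = A\nabla v$, $v_t - \dvrg y + \lambda v = f$ in $Q_T$, and $y\cdot n + \sigma v = g$ on $S_R$; together with $v \in \Ho{1}(Q_T)$ these make $v$ a generalized solution of (\ref{eq:parabolic-problem-equation})--(\ref{eq:parabolic-problem-robin-boundary-condition}), so $v = u$ and $y = A\nabla u$ by uniqueness.

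I expect the main obstacle to lie in the bookkeeping of the second step: the budget $\delta\NormA{\nabla e}$ has to be distributed across three inequalities of different geometric character (interior Friedrichs, interior $A$-pairing, boundary trace) so that the Friedrichs constant $\CF$ and the trace constant $\Ctr$ appear only in the first and third terms while the second carries the $A^{-1}$-weighting on $\R_d$, all under the pointwise-in-$t$ constraint (\ref{eq:alpha-relation}); the non-trivial check is that the resulting coefficients $\alpha_1(t)\,\CF^2/\nu_1$, $\alpha_2(t)$, and $\alpha_3(t)\,\Ctr^2/\nu_1$ match those in (\ref{eq:majorant-1}) exactly.
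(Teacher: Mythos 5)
Your proposal is correct and follows essentially the same route as the paper's proof: the same energy-balance identity, the same introduction of $y$ via integration by parts, the same $\mu$/$(1-\mu)$ splitting of the reaction residual with weighted Cauchy--Schwarz, Friedrichs, and trace inequalities, Young's inequality with the budget constraint (\ref{eq:alpha-relation}), and the same non-negativity-plus-uniqueness argument for part (ii). The bookkeeping you flag as the main obstacle works out exactly as you describe, since each of the three $\alpha_i$-estimates contributes $\frac{1}{2\alpha_i(t)}\|\nabla e\|^2_A$, which after multiplying by two sums to $\delta\NormA{\nabla e}$.
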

%
%
\begin{proof}
(i) We  transform the right-hand side of (\ref{eq:energy-balance-equation}) by 
means of the relation 
\begin{equation*}
	\Int_{Q_T} \dvrg y \: \eta \dxt + \Int_{Q_T} y \cdot \nabla{\eta}\dxt =
	\Int_{S_R} y \cdot n \: \dst,
\end{equation*}
which yields
\begin{multline}
	\frac12 \| e(\cdot, T) \|^2_{\Omega} + \NormA{\nabla{e}} + 
	\Int_{S_R} \sigma e^2 \dst + 
        \Int_{Q_T} \lambda e^2 \dxt = \\
	\I_f + \I_d + \I_b + \frac12 \| e(\cdot, 0) \|^2_{\Omega},
	\label{eq:gen-stat-u-v-norm-y}
\end{multline}
where
\begin{equation}
	\I_f = \Int_{Q_T} \R_f \, e \dxt, \quad
	\I_d = \Int_{Q_T} \R_d  \cdot \nabla{e} \dxt, \quad 
	\I_b = \Int_{S_R} \R_b \, e \dst. 
	\label{eq:Ir-Id-Ib-terms}
\end{equation}
%
By means of the H\"older inequality, we find that 
\begin{equation}
	\I_d = \Int_{Q_T} \R_d \cdot \nabla{e} \dxt \leq
	       \Int_0^T   \left  \| \, \R_d \, \right \|_{A^{-1}} \|\nabla{e}\|_A \dt
	\label{eq:holder-estimate-1}			
\end{equation}	
and
\begin{equation}
	\I_b = \Int_{S_R} \R_b \, e \dst \leq 
	      \Int_0^T \left\| \R_b \right\|_{\Gamma_R} \|e\|_{\Gamma_R} \dt \leq 
 	      \Int_0^T \left\| \R_b \right\|_{\Gamma_R} \frac{\Ctr}{\sqrt{\nu_1}} \|\nabla{e}\|_A \dt, 
	\label{eq:holder-estimate-2}
\end{equation}	
where $\nu_1$ is the constant in (\ref{eq:operator-a}). 
Let $\mu(x,t)$ be a real-valued function taking values in $[0, 1]$. Next, we estimate 
the term $\I_f$ as follows:
\begin{equation}
	\I_f \leq \bigintsss\limits_0^T \Bigg (
	         \bigg\| \,  \frac{\R_{f,\, \mu}}{\sqrt{\lambda}} \,  \bigg\|_{\Omega}
	         \big \|  \sqrt{\lambda}e  \,  \big \|_{\Omega} +
	         \frac{\CF}{\sqrt{\nu_1}} \Big \| \,  \R_{f, 1 - \mu} \,  \Big \|_{\Omega}
	         \|\, \nabla e \, \|_A \Bigg )\dt.
  \label{eq:holder-estimate-3}
\end{equation}	
In \cite{RepinSauter2006}, this decomposition was used in order to overcome
difficulties arising in the stationary problem  if $\lambda$ is small (or zero) 
in some parts of the domain and large in another (more detailed study of this 
form of the majorant can be found in 
\cite{NeittaanmakiRepinMaxwell2010} and  \cite{MaliNeittaanmakiRepin2013}).

By combining (\ref{eq:holder-estimate-1})--(\ref{eq:holder-estimate-3}), we obtain
\begin{multline}
	\frac12 \| e(\cdot, T) \|^2_{\Omega} + \NormA{\nabla e} + 
	\Int_{S_R} \sigma e^2 \dst + 
        \Int_{Q_T} \lambda e^2 \dxt \leq 
	\frac12 \| e(\cdot, 0) \|^2_{\Omega} + \\
	\Int_0^T \Bigg (
	\bigg\| \, \frac{\R_{f,\, \mu}}{\sqrt{\lambda}} \, \bigg\|_{\Omega} \big \|\sqrt{\lambda} \, e \big \|_{\Omega} + 
        \frac{\CF}{\sqrt{\nu_1}} \|\R_{f, 1 - \mu} \,\|_{\Omega} \|\nabla e \|_A + \\
	\|\R_d \, \|_{A^{-1}} \|\nabla e\|_A + \big\| \R_b \, \big\|_{\Gamma_R} \frac{\Ctr}{\sqrt{\nu_1}} \|\nabla e\|_A \Bigg ) \dt.
	\label{eq:estimate}
\end{multline} 	      	      
The second term in the right-hand side of (\ref{eq:estimate}) is estimated by the
Young--Fenchel inequality
\begin{alignat}{2}	
	\Int_0^T \bigg\| \frac{\R_{f,\, \mu}}{\sqrt{\lambda}}  \,\bigg\|_{\Omega} 
		 \big \| \sqrt{\lambda} \, e \big \|_{\Omega} \dt & \leq   
	\Int_0^T \Bigg ( \frac\gamma2 \bigg\| \frac{\R_{f,\, \mu}}{\sqrt{\lambda}}  \,\bigg\|^2_{\Omega} +
		 \frac{1}{2\gamma} \big \| \sqrt{\lambda} e \big \|^2_{\Omega} \Bigg ) \dt, 
	\label{eq:young-fenchel-1}
\end{alignat}	
where $\gamma$ is an arbitrary positive constant parameter. Analogously,
\begin{alignat}{2}	
	\Int_0^T \frac{\CF}{\sqrt{\nu_1}} \| \R_{f, 1 - \mu}\|_A \|\nabla e \|_A \dt & \leq 
	\Int_0^T \! \! \Bigg( \! \frac{\alpha_1(t)}{2}  \frac{\CF^2}{\nu_1} \|\R_{f, 1 - \mu} \|^2_{\Omega} \! + 
	               \frac{1}{2\alpha_1(t)} \|\nabla e \|^2_A \! \Bigg) \dt, \\
	\Int_0^T \| \R_d \, \|_{A^{-1}} \|\nabla e\|_A  \dt & \leq 
  \Int_0^T \! \! \Bigg( \! \frac{\alpha_2(t)}{2}  \| \R_d \, \|^2_{A^{-1}} \! + 
							    \frac{1}{2\alpha_2(t)} \|\nabla e\|^2_A \! \Bigg) \dt,
\end{alignat}	
and
\begin{alignat}{2}
	\Int_0^T \big\| \R_b \big\|_{\Gamma_R}
	               \frac{ \Ctr }{\sqrt{\nu_1}} \|\nabla e\|_A & \leq 
	\Int_0^T \Bigg(\frac{\alpha_3(t)}{2} \frac{\Ctr^2}{\nu_1} \| \R_b \|^2_{\Gamma_R} + 
							  \frac{1}{2\alpha_3(t)} \|\nabla e \|^2_A \Bigg) \dt.
	\label{eq:young-fenchel-4}
\end{alignat}
Here, $\alpha_1(t)$, $\alpha_2(t)$, and $\alpha_3(t)$ are functions satisfying 
the relation (\ref{eq:alpha-relation}). Then, the estimate (\ref{eq:majorant-1}) follows 
from (\ref{eq:young-fenchel-1})--(\ref{eq:young-fenchel-4}).

(ii) Existence of the pair $(v, y) \in \Ho{1}(Q_T) \times Y_{\dvrg}^*(Q_T)$ minimizing 
the functional $\maj (v, y; \delta, \gamma, \mu)$ can be proven 
straightforwardly. Indeed, let $v = u$ and
$y = A \nabla u$. Since $\dvrg (A \nabla u) \in \L{2}(Q_T)$, we see that 
$y \in Y_{\dvrg}^*(Q_T)$. In this case, 
according to 
(\ref{eq:parabolic-problem-equation})--(\ref{eq:parabolic-problem-robin-boundary-condition}),
\begin{alignat}{2}
e(x, 0) & = (u - v)(x, 0) = \varphi(x) - v(x, 0) = 0, \nonumber\\
\R_f(u, A \nabla u) & = f - u_t - \lambda u + \dvrg A \nabla u = 0, \nonumber\\
\R_d (u, A \nabla u) & = A \nabla u - A \nabla{u} = 0, \nonumber \\
\R_b (v, y) & =  g - \sigma v - A \nabla u \cdot n = 0, \nonumber
\end{alignat}
Thus, we see that $\maj ( u, A \nabla u ; \delta, \gamma, \mu) = 0$ and, therefore, 
the exact lower bound of 
$\maj ( v, y ; \delta, \gamma, \mu)$ is attained on the pair presenting the exact solution of 
(\ref{eq:parabolic-problem-equation})--(\ref{eq:parabolic-problem-robin-boundary-condition}).

Assume that $\maj (v, y; \delta, \gamma, \mu) = 0$, which means that for a.a. 
$(x, t) \in Q_T$ the following relations hold:
\begin{alignat}{4}
	y = A \nabla v                    \; & \quad \mbox{a.a.} & \quad (t, x) \in Q_T, 
	\label{eq:requirements-set-1}\\
	f - v_t - \lambda v + \dvrg y = 0 \; & \quad \mbox{a.a.} & \quad (t, x) \in Q_T, 
	\label{eq:requirements-set-2}\\
  v(x, 0) = \varphi(x)                     \; & \quad \mbox{a.a.} & \quad  x \in \Omega, 
	\label{eq:requirements-set-3}\\
  v = 0                       \; & \quad \mbox{a.a.} & \quad (t, x) \in S_D, 
	\label{eq:requirements-set-4}\\
	y \cdot n + \sigma v = g          \; & \quad \mbox{a.a.} & \quad (t, x) \in S_R.
  \label{eq:requirements-set-5}
\end{alignat} 
From (\ref{eq:requirements-set-2})--(\ref{eq:requirements-set-5}), it follows that
\begin{equation}
	\Int_{Q_T} ( f - v_t - \lambda v ) \eta \dxt - \Int_{Q_T} y \cdot \nabla{\eta} + 
	\Int_{S_N} g \eta \dst = 0,
	\quad \forall \eta \in \Ho{1}(Q_T).
	\label{eq:generalized-statement-for-v}
\end{equation}
%
%
In view of (\ref{eq:requirements-set-1}), this relation is equivalent to 
(\ref{eq:generalized-statement}), whence it follows that $v = u$ and 
$y = A \nabla u$.
\end{proof}
%
%
\begin{remark}

We see that $\maj (v, y; \delta, \gamma, \mu)$ depends on a collection of 
parameters, which can be selected within certain admissible sets. Varying $\delta$ and 
$\gamma$ allows us to obtain estimates for different error measures. By selecting the 
functions $\alpha_i$ and $\mu$, we find the best possible value of the majorant. This fact 
is beneficial for practical applications because we can select values of the parameters 
in an optimal way for a concrete problem. In particular, $\mu$ can be set to $0$ and $1$. For
these two cases, we use the abridged notation $\majmuzero$ and $\majmuone$:
\begin{multline*}
	\majmuzero \! := 
	\| e (\cdot, 0) \|^2_{\Omega} + 
	                 \Int_0^T \! \! \Bigg ( \! \alpha_1(t) \frac{\CF^2}{\nu_1} \| \R_f \|^2_{\Omega} + 
									 \alpha_2(t) \| \R_d\|^2_{A ^{-1}} + 
									 \alpha_3(t) \frac{\Ctr^2}{\nu_1} \left\| \R_b \right\|^2_{\Gamma_R} \!\! \Bigg)\dt
\end{multline*}
and
\begin{multline*}
	\majmuone  \! := 
	\| e (\cdot, 0) \|^2_{\Omega} + \!
	        \Int_0^T \! \! \Bigg ( \! \gamma \left\|\frac{\R_f }{\sqrt{\lambda}} \, \right\|^2_{\Omega} \! + 
	                 \alpha_2(t) \| \R_d \|^2_{A^{-1}} + 
									 \alpha_3(t) \frac{\Ctr^2}{\nu_1} \left\| \R_b \right\|^2_{\Gamma_R} \! \Bigg ) \dt.
	\label{eq:majorant-mu-1}
\end{multline*}
The majorant $\majmuzero$ is well adapted to 
problems, in which $\lambda$ is small or zero (so that the impact of the reaction term 
is insignificant). In such type problems, we should avoid the term 
$\left\|\dfrac{\R_{f,\, \mu} (v, y) }{\sqrt{\lambda}} \, \right\|^2_{\Omega}$, which makes the 
whole estimate sensitive to the residual $\R_f(v, y)$ and may lead to a considerable 
overestimation of the error. The estimate $\majmuone$ 
is useful if $\lambda$ is not small and may attain large 
values in some parts of $\Omega$. If $\lambda$ reaches both small (or zero) and large 
values, then the combined estimate (\ref{eq:majorant-1}) is preferable. 
\end{remark}


\subsection{Estimates based upon domain decomposition}

The majorant defined by (\ref{eq:majorant-1}) contains the Friedrichs constant $\CF$ and
the trace constant $\Ctr$. If $\Omega$ has a complicated geometry, then finding these 
constants (or 
guaranteed bounds of them) may not be an easy task. Below we suggest the method, 
which allows to overcome this difficulty. It is based on domain decomposition and leads 
to the estimates with a different 
set of constants (a consequent discussion of this method for elliptic problems can be 
found in \cite{RepinDeGruyter2008}).  

Assume that $\Omega$ is decomposed into a set of sub-domains
\begin{equation}
\overline{\Omega}  = \bigcup\limits_{\,i=1, ... ,N} \overline{\Omega}_i, 
\quad \Omega_i \cap \Omega_j = \emptyset, 
\quad i \neq j. 
\label{eq:omega-representation}
\end{equation}  
We use the Poincare inequalities
\begin{equation}
\left \| \, \widetilde{w} \,\right\|_{\Omega_i} \leq 
\CPi \big \| \nabla w \big \|_{\Omega_i}, \quad i = 1, ... , N, \quad \forall w \in \H{1}(\Omega),
\label{eq:poincare-inequality}
\end{equation}
where $\widetilde{w} = w - {{ \big \{ | w} | \big\} }_{\Omega_i}	$, and 
${{ \big \{ | w} | \big\} }_{\Omega_i}$ denotes the 
mean value of $w$ on $\Omega_i$. If all $\Omega_i$ are convex, then $\CPi$ can be estimated from the 
above by the quantity ${\rm diam} \, \Omega_i/\pi$ (see \cite{PayneWeinberger1960}). We 
use this fact in order to represent the majorant in a somewhat different form. 
In further analysis, we assume (for the sake of simplicity only) that $S_T = S_D$ and 
$\varphi(x) = v(x, 0)$.

\begin{theorem}
\label{th:theorem-majorant-for-decomposed-domain-1}
\vskip 5pt
For any $v \in \Ho{1}(Q_T)$ and $y \in Y^*_{\dvrg}(Q_T)$ the following inequality holds:
\begin{multline}
	(2 - \delta)\NormA{\nabla e} + 
	\Bigg( 2 - \frac{1}{\rho_1} - \frac{1}{\rho_2} \Bigg) \Big \| \sqrt{\lambda} e  \Big \|^2_{Q_T}  + 
	\| e (\cdot, T) \|^2_{\Omega} =: \error_{({\nu},\, {\theta},\, {\zeta})} \leq \majd \! := \\
	        \Int_0^T \! \! \Bigg ( \! \rho_1 \! \left \| \frac{\R_{f,\, \mu} (v, y)}{\sqrt{\lambda}}  \, \right \|^2_{\Omega} \!\! + 
					\rho_2 R^2_{{\mathrm I}, 1}(t) \! + 
					\alpha_1(t) R^2_{{\mathrm I}, 2}(t) \! + 
					\alpha_2(t)  \| \R_d (v, y) \, \|^2_{A^{-1}} \! \Bigg ) \! \dt, 
\label{eq:majorant-decomposed-ii}
\end{multline}
where $\delta \in (0, 2]$, 
$\rho_1 \geq \dfrac{1}{2 - \frac{1}{\rho_2}} $, 
$\mu \in [0, 1]$, $\R_{f,\, \mu}(v, y)$ and $\R_d(v, y)$ are defined in
(\ref{eq:r-mu}) and (\ref{eq:r-d}), respectively, and  
\begin{equation*}
R_{{\mathrm I}, 1}(t) := \sqrt{\Sum_{i = 1}^{N} \frac{|\Omega_i|}{\lambda_i} \left( \! \Mean{ \R_{f, 1 - \mu} }{\Omega_i} \! \right)^2 }, \;
R_{{\mathrm I}, 2}(t) :=  \sqrt{\! \Sum_{i = 1}^{N} \frac{\CPi^2}{\nu_1} \left\| \R_{f, 1 - \mu} \right\|^2_{\Omega_i}  }.
\end{equation*}
Here, $\lambda_i = \min\limits_{x \in \Omega_i} \lambda(x, t)$ for a.a. $t \in [0, T]$, 
${\nu} = 2 - \delta$, 
${\theta} =\sqrt{ \left(2 - \frac{1}{\rho_1} - \frac{1}{\rho_2} \right) \lambda}$, and
${\zeta} = 1$, and $\alpha_1(t)$, $\alpha_2(t)$ are 
positive scalar-valued functions satisfying the relation
$\frac{1}{\alpha_1(t)} + \frac{1}{\alpha_2(t)} = \delta$.
\end{theorem}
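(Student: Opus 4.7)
The plan is to reprove Theorem~\ref{th:theorem-minimum-of-majorant-I} starting from the same energy identity (\ref{eq:energy-balance-equation}), but replacing the single Friedrichs inequality on $\Omega$ by the family of local Poincar\'e inequalities (\ref{eq:poincare-inequality}) on the subdomains $\Omega_i$. Under the present hypotheses $S_T = S_D$ and $\varphi = v(\cdot,0)$, the boundary integral $\I_b$ and the initial error $\|e(\cdot,0)\|_\Omega^2$ both vanish, so (\ref{eq:energy-balance-equation}) reduces to
\begin{equation*}
  \tfrac12\|e(\cdot,T)\|_\Omega^2 + \NormA{\nabla e} + \big\|\sqrt{\lambda}\,e\big\|_{Q_T}^2 = \I_f + \I_d.
\end{equation*}
The dual residual $\I_d$ is handled exactly as in (\ref{eq:holder-estimate-1}) followed by a Young--Fenchel step with weight $\alpha_2(t)$, producing $\frac{\alpha_2(t)}{2}\|\R_d\|_{A^{-1}}^2 + \frac{1}{2\alpha_2(t)}\|\nabla e\|_A^2$.

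For the equation residual I would split $\R_f = \R_{f,\mu} + \R_{f,1-\mu}$ as in (\ref{eq:holder-estimate-3}). The piece carrying $\R_{f,\mu}$ is treated verbatim by Cauchy--Schwarz in $\Omega$ and Young--Fenchel with parameter $\rho_1$, giving $\frac{\rho_1}{2}\|\R_{f,\mu}/\sqrt{\lambda}\|_\Omega^2 + \frac{1}{2\rho_1}\|\sqrt{\lambda}e\|_\Omega^2$. The piece carrying $\R_{f,1-\mu}$ is distributed among the subdomains via (\ref{eq:omega-representation}); on each $\Omega_i$ I would write $e = \tilde e_i + \smallMean{e}{\Omega_i}$ with $\tilde e_i$ mean-free, so that
\begin{equation*}
  \int_{\Omega_i}\R_{f,1-\mu}\, e\, dx = \int_{\Omega_i}\R_{f,1-\mu}\,\tilde e_i\, dx + |\Omega_i|\smallMean{\R_{f,1-\mu}}{\Omega_i}\smallMean{e}{\Omega_i}.
\end{equation*}
This cleanly separates a gradient-controlled oscillation part from a reaction-controlled mean-value part.

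The oscillation integral is bounded by Cauchy--Schwarz and (\ref{eq:poincare-inequality}) as $\CPi\|\R_{f,1-\mu}\|_{\Omega_i}\|\nabla e\|_{\Omega_i}$; passing from $\|\nabla e\|_{\Omega_i}$ to the $A$-weighted quantity through the lower bound in (\ref{eq:operator-a}) and summing over $i$ with a discrete Cauchy--Schwarz collapses the sum to $R_{\mathrm{I},2}(t)\|\nabla e\|_A$, and a Young--Fenchel step with weight $\alpha_1(t)$ yields $\frac{\alpha_1(t)}{2} R_{\mathrm{I},2}^2(t) + \frac{1}{2\alpha_1(t)}\|\nabla e\|_A^2$. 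For the mean-value part I would combine $|\smallMean{e}{\Omega_i}|\le |\Omega_i|^{-1/2}\|e\|_{\Omega_i}$ (Cauchy--Schwarz) with $\|e\|_{\Omega_i}^2\le \lambda_i^{-1}\|\sqrt{\lambda}\,e\|_{\Omega_i}^2$, so that another discrete Cauchy--Schwarz in $i$ produces the factor $R_{\mathrm{I},1}(t)\|\sqrt{\lambda}\,e\|_\Omega$; a final Young--Fenchel step with weight $\rho_2$ gives $\frac{\rho_2}{2} R_{\mathrm{I},1}^2(t) + \frac{1}{2\rho_2}\|\sqrt{\lambda}\,e\|_\Omega^2$.

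Finally I would integrate in $t$, collect the four bounds, and multiply the identity by $2$. The gradient surpluses add up to $\bigl(\tfrac{1}{\alpha_1(t)} + \tfrac{1}{\alpha_2(t)}\bigr)\|\nabla e\|_A^2 = \delta\|\nabla e\|_A^2$, leaving $(2-\delta)\NormA{\nabla e}$ on the left, while the reaction surpluses add up to $\bigl(\tfrac{1}{\rho_1} + \tfrac{1}{\rho_2}\bigr)\|\sqrt{\lambda}\,e\|_{Q_T}^2$, leaving the coefficient $2 - \tfrac{1}{\rho_1} - \tfrac{1}{\rho_2}$, which is non-negative precisely under the stated hypothesis $\rho_1 \ge 1/(2 - \tfrac{1}{\rho_2})$. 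The only subtle step I anticipate is the mean-value estimate: the pair of bounds $|\smallMean{e}{\Omega_i}|\le |\Omega_i|^{-1/2}\|e\|_{\Omega_i}$ and $\|e\|_{\Omega_i}^2\le \lambda_i^{-1}\|\sqrt{\lambda}\,e\|_{\Omega_i}^2$ must be combined in the correct order so that the weight $|\Omega_i|/\lambda_i$ appearing inside $R_{\mathrm{I},1}$ is exactly as stated, and subdomains with $\lambda_i = 0$ must be accommodated by the choice $\mu\equiv 1$ there so that $\R_{f,1-\mu}\equiv 0$ and the apparent singularity is vacuous.
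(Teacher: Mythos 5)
Your proposal is correct and follows essentially the same route as the paper: split $\R_f$ via $\mu$, localize the $(1-\mu)$-part over the subdomains, separate a mean-free (Poincar\'e/gradient-controlled) piece from a mean-value ($\lambda$-weighted) piece, and close with Young--Fenchel steps using $\rho_1$, $\rho_2$, $\alpha_1(t)$, $\alpha_2(t)$. The only cosmetic difference is that you subtract the mean from $e$ while the paper subtracts it from $\R_{f,1-\mu}$ --- the two decompositions produce identical terms --- and your closing remark about subdomains with $\lambda_i=0$ is a legitimate caveat that the paper handles only implicitly (via the choice of $\mu$ or the mean-zero condition of the subsequent theorem).
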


\begin{proof}
Consider the integral identity (\ref{eq:gen-stat-u-v-norm-y}). 
The term $\I_f$ can be represented as 
\begin{equation}
\I_f = \Int_{Q_T}  \R_{f,\, \mu} e \dxt +  \Int_{Q_T} \R_{f, 1 - \mu} e \dxt 
     = \I^{{\mu}}_f + \I^{1 - {\mu}}_f.
\label{eq:representation-of-if-with-mu}
\end{equation}
$\I^{{\mu}}_f$ is estimated as 
\begin{equation}
\I^{{\mu}}_f \leq  \Int_0^T\left \| \frac{\R_{f,\, \mu}}{\sqrt{\lambda}} \right \|_{\Omega}  
          \left \| \sqrt{\lambda} e \right \|_{\Omega} \dt.
\label{eq:estimate-of-if-with-mu}
\end{equation}
By means of the H\"older inequality, for $\I^{1 - {\mu}}_f$ we have
\begin{multline}
 \I^{1 - {\mu}}_f = \Int_0^T \Bigg( \Sum_{i = 1}^{N} \Int_{\Omega_i} \widetilde{\R}_{f, 1 - \mu} \, e \dx + 
\Sum_{i = 1}^{N} \Mean{ \R_{f, 1 - \mu} }{\Omega_i} \Int_{\Omega_i} e \dx \Bigg) \dt \leq \\ 
\Int_0^T  \Sum_{i = 1}^{N} \Int_{\Omega_i}  \widetilde{\R}_{f, 1 - \mu} \, e \dx \dt  + 
\Int_0^T  \Sum_{i = 1}^{N} \frac{\sqrt{|\Omega_i|}}{\sqrt{\lambda_i}} \Mean{  \R_{f, 1 - \mu} }{\Omega_i}  \big \|\sqrt{\lambda} \, e \big \|_{\Omega_i}  \dt. 
\label{eq:majorant-on-decomposed-omega}
\end{multline} 
where $\lambda_i = \min\limits_{x \in \Omega_i} \lambda(x, t)$ for a.a. $t \in [0, T]$.
Each of the terms on the right-hand side of (\ref{eq:majorant-on-decomposed-omega}) can 
be estimated as follows:
\begin{alignat}{2}
\Int_0^T & \Sum_{i = 1}^{N} \; \Int_{\Omega_i} \widetilde{\R}_{f, 1 - \mu} e \dx \dt \leq 
\Int_0^T {R}_{{\mathrm I}, 2} \; \| \nabla e \|_{A} \dt, \\ 
\Int_0^T & \Sum_{i = 1}^{N}  \frac{\sqrt{|\Omega_i|}}{\sqrt{\lambda_i}}\Mean{ \R_{f, 1 - \mu} }{\Omega_i}  \big \|\sqrt{\lambda} \, e \big \|_{\Omega_i} \dt \leq \Int_0^T {R}_{{\mathrm I}, 1}  \big \|\sqrt{\lambda} \, e \big \|_{\Omega} \dt.
\end{alignat}
At last, using the Young--Fenchel inequality, we obtain the following estimates
\begin{alignat}{2}
\Int_0^T \left \| \frac{\R_{f,\, \mu} }{\sqrt{\lambda}} \right \|_{\Omega} \left \| \sqrt{\lambda} e \right \|_{\Omega} \dt & \leq 
\Int_0^T \Bigg(
	\frac{{\rho}_1}{2} \left \| \frac{\R_{f,\, \mu}}{\sqrt{\lambda}}  \right \|^2_{\Omega} + 
	\frac{1}{2{\rho}_1} \left \| \sqrt{\lambda} e \right \|^2_{\Omega} 
\Bigg) \dt, \label{eq:young-fenchel-for-decomposed-domain-2-ii-1}
\end{alignat}
\begin{alignat}{2}
\Int_0^T {R}_{{\mathrm I}, 1} \|\sqrt{\lambda} \, e \big \|_{\Omega} \dt & \leq 
  \Int_0^T \Bigg( \frac{{\rho}_2}{2} {R}^2_{{\mathrm I}, 1} + 
	\frac{1}{2 {\rho}_2} \big \|\sqrt{\lambda} \, e \big \|^2_{\Omega} 
  \Bigg) \dt,\label{eq:young-fenchel-for-decomposed-domain-2-ii-2} \\
\Int_0^T {R}_{{\mathrm I}, 2} \; \| \nabla e \|_{A} \dt & \leq \Int_0^T \Bigg(
	\frac{{\alpha}_1(t)}{2} {R}^2_{{\mathrm I}, 2} + \frac{1}{2 {\alpha}_1(t)} \| \nabla e \|^2_{A},
\Bigg) \dt, \label{eq:young-fenchel-for-decomposed-domain-2-ii-3}
\end{alignat}
and, analogously, 
\begin{multline}
	\Int_0^T \| \R_d (v, y) \|_{A^{-1}} \|\nabla e\|_A  \dt \leq 
  \Int_0^T \Bigg( \frac{{\alpha}_2(t)}{2}  \| \R_d \|^2_{A^{-1}} + 
							    \frac{1}{2{\alpha}_2(t)} \|\nabla e\|^2_A \Bigg) \dt.
\label{eq:young-fenchel-for-decomposed-domain-2-ii}
\end{multline}
%
By combining 
(\ref{eq:young-fenchel-for-decomposed-domain-2-ii-1})--(\ref{eq:young-fenchel-for-decomposed-domain-2-ii}),
we obtain (\ref{eq:majorant-decomposed-ii}).
\end{proof}

\vskip 10pt

Consider a special case, which arises if we impose additional conditions, namely,
\begin{equation}
\Mean{ \R_{f, 1 - \mu} (v, y) }{\Omega_i} = 0, \quad i = 1, ..., N, \quad
\mbox{for a.a.}\; t \in [0, T], 
\label{eq:mean-condition}
\end{equation}
where $\mu$ is inherited from (\ref{eq:majorant-1}). Since the functions $y$ and $\mu$ 
are in our disposal, these integral type conditions do not lead to essential technical
difficulties provided that $N$ is not too large. Now, (\ref{eq:majorant-decomposed-ii}) 
can be represented in a simpler form.
%
\begin{theorem}
\label{th:theorem-majorant-for-decomposed-domain-2}
\vskip 5pt
If (\ref{eq:mean-condition}) is satisfied, then for any $v \in \Ho{1}(Q_T)$ and 
$y \in Y^*_{\dvrg}(Q_T)$
\begin{multline}
	(2 - \delta)\NormA{\nabla e} \! + 
	\Bigg( \!  2 - \frac{1}{\gamma} \! \Bigg) \Big \| \sqrt{\lambda} e  \Big \|^2_{Q_T} \! + 
	\| e (\cdot, T) \|^2_{\Omega} =: \error_{({\nu},\, {\theta},\, {\zeta})} \leq \\ 
	\majd \, :=
	        \Int_0^T \Bigg ( \gamma \bigg\|\frac{\R_{f,\, \mu} (v, y)}{\sqrt{\lambda}} \, \bigg\|^2_{\Omega} + 
					{\alpha_1(t)} R^2_{\mathrm I}(t) + 
					{\alpha_2(t)}  \| \R_d(v, y) \, \|^2_{A^{-1}} \Bigg) \dt,
\label{eq:majorant-decomposed-i}
\end{multline}
where $\delta \in (0, 2]$, $\gamma \geq \frac12$, $\mu \in [0, 1]$,
$\R_{f,\, \mu}(v, y)$ and $\R_d(v, y)$ are defined in (\ref{eq:r-mu}) and (\ref{eq:r-d}), 
respectively, and 
\begin{equation*}
R_{\mathrm I}(t) :=  \sqrt{ \Sum_{i = 1}^{N} \frac{\CPi^2}{\nu_1}  \left\| \R_{f, 1 - \mu} \, \right\|^2_{\Omega_i}  },
\end{equation*}
${\nu} = 2 - \delta$, 
${\theta} = \sqrt{ \left( 2 - \frac{1}{\gamma} \right) \lambda }$,  and ${\zeta} = 1$ 
are positive weights, and $\alpha_1(t)$, $\alpha_2(t)$ are positive scalar-valued 
functions satisfying the relation
$\frac{1}{\alpha_1(t)} + \frac{1}{\alpha_2(t)} = \delta$.
\end{theorem}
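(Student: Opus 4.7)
The plan is to adapt the proof of Theorem \ref{th:theorem-majorant-for-decomposed-domain-1} and exploit the fact that condition (\ref{eq:mean-condition}) kills the mean-value contribution on each subdomain, so no auxiliary parameter $\rho_2$ is needed and the estimate simplifies. I would start from the energy balance identity (\ref{eq:gen-stat-u-v-norm-y}). Under the simplifying assumptions $S_T = S_D$ and $\varphi(x) = v(x,0)$ (inherited from the previous theorem), both $\I_b$ and $\|e(\cdot,0)\|^2_\Omega$ drop out, so everything reduces to controlling $\I_f$ and $\I_d$ in terms of the error measure on the left.

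Next, I would split $\I_f = \I_f^\mu + \I_f^{1-\mu}$ as in (\ref{eq:representation-of-if-with-mu}) and use the decomposition (\ref{eq:omega-representation}), writing on each $\Omega_i$
\[
\R_{f,1-\mu} = \widetilde{\R}_{f,1-\mu} + \Mean{\R_{f,1-\mu}}{\Omega_i}.
\]
Under hypothesis (\ref{eq:mean-condition}) the mean part vanishes, so $\R_{f,1-\mu} = \widetilde{\R}_{f,1-\mu}$ on each $\Omega_i$. Using that $\widetilde{\R}_{f,1-\mu}$ is mean-free allows one to replace $e$ by $e - \Mean{e}{\Omega_i}$ inside the inner product on each $\Omega_i$, so that the Poincare inequality (\ref{eq:poincare-inequality}) applies. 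Summing over $i$ via Cauchy--Schwarz and invoking the lower bound in (\ref{eq:operator-a}) yields
\[
\I_f^{1-\mu} \leq \int_0^T R_{\mathrm I}(t)\,\|\nabla e\|_A\,dt.
\]
For $\I_f^\mu$, I would apply Cauchy--Schwarz with the weight $\sqrt{\lambda}$ to obtain $\|\R_{f,\mu}/\sqrt{\lambda}\|_\Omega \, \|\sqrt{\lambda}\,e\|_\Omega$, and for $\I_d$ the same bound (\ref{eq:holder-estimate-1}) as in Theorem \ref{th:theorem-minimum-of-majorant-I}.

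Then I would apply the Young--Fenchel inequality term by term: with parameter $\gamma$ for $\I_f^\mu$, and with $\alpha_1(t),\alpha_2(t)$ subject to $1/\alpha_1+1/\alpha_2=\delta$ for the $R_{\mathrm I}$ and $\R_d$ contributions, each time absorbing one half of the resulting quadratic error term into the left-hand side of (\ref{eq:gen-stat-u-v-norm-y}). Collecting the absorbed terms, the coefficient of $\|\nabla e\|^2_A$ becomes $1-\delta/2$, of $\|\sqrt{\lambda}e\|^2_{Q_T}$ becomes $1-1/(2\gamma)$, and of $\|e(\cdot,T)\|^2_\Omega$ remains $1/2$. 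Multiplying through by $2$ produces the weights $(\nu,\theta,\zeta) = (2-\delta,\,\sqrt{(2-1/\gamma)\lambda},\,1)$ exactly as stated, and the bound (\ref{eq:majorant-decomposed-i}) follows.

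The constraint $\gamma \geq 1/2$ arises precisely from requiring $2 - 1/\gamma \geq 0$ so that the left-hand side remains a nonnegative quadratic form. The argument is essentially a streamlined version of the proof of Theorem \ref{th:theorem-majorant-for-decomposed-domain-1}; the one substantive new input is the vanishing of $\Mean{\R_{f,1-\mu}}{\Omega_i}$, which eliminates both the $R_{\mathrm I,1}$ term and the coupled parameter $\rho_2$. I do not expect a real obstacle: the difficulty lies purely in careful bookkeeping of the constants and verifying that the resulting constraint on $\gamma$ is the sharpest one compatible with a nonnegative left-hand side.
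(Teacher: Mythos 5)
Your proposal is correct and follows essentially the same route as the paper's proof: the mean-value condition (\ref{eq:mean-condition}) lets you replace $e$ by $\widetilde{e}=e-\smallMean{e}{\Omega_i}$ in $\I_f^{1-\mu}$, the Poincar\'e inequality (\ref{eq:poincare-inequality}) then gives $\I_f^{1-\mu}\le\Int_0^T R_{\mathrm I}\,\|\nabla e\|_A\dt$, and the Young--Fenchel steps with $\gamma$, $\alpha_1(t)$, $\alpha_2(t)$ reproduce the stated weights. Your bookkeeping of the coefficients and the origin of the constraint $\gamma\ge\tfrac12$ match the paper exactly.
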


\begin{proof}
If (\ref{eq:mean-condition}) holds, then,
\begin{equation}
 \I^{1 - {\mu}}_f \!=\! \Int_0^T \Sum_{i = 1}^{N} \Int_{\Omega_i} \R_{f, 1 - \mu} \, e \dx \dt \!=\!
                  \Int_0^T \Sum_{i = 1}^{N} \Int_{\Omega_i} \R_{f, 1 - \mu} \, \widetilde{e} \dx \dt.
\end{equation} 
Therefore, using (\ref{eq:poincare-inequality}), we obtain
\begin{equation}
\I^{1-{\mu}}_f \leq \Int_0^T {R}_{\mathrm I} \; \| \nabla e \|_{A} \dt.
\end{equation}
By means of the Young--Fenchel inequality, we deduce
\begin{alignat}{2}
\Int_0^T \left \| \frac{\R_{f,\, \mu}}{\sqrt{\lambda}}  \, \right \|_{\Omega} \left \| \sqrt{\lambda} e \right \|_{\Omega} \dt & \leq
\Int_0^T \left( \frac{{\gamma}}{2} \left \| \frac{\R_{f,\, \mu} }{\sqrt{\lambda}} \, \right \|^2_{\Omega} + 
                \frac{1}{2 {\gamma}} \left \| \sqrt{\lambda} e \right \|^2_{\Omega} \right) \dt 
\label{eq:young-fenchel-for-decomposed-domain-i-1}
\end{alignat}
and
\begin{alignat}{2}
\Int_0^T {R}_{\mathrm I} \; \| \nabla e \|^2_{A} \dt & \leq \Int_0^T \left( \frac{{\alpha}_1(t)}{2} R^2_{\mathrm I} + \frac{1}{2 {\alpha}_1(t)} \| \nabla e \|^2_{A} \right) \dt.
\label{eq:young-fenchel-for-decomposed-domain-i-2}
\end{alignat}
%
The term $\I_d$ is estimated analogously to the method used in proof of Theorem 
\ref{th:theorem-minimum-of-majorant-I}:
\begin{multline}
	\I_d \leq \Int_0^T \| \R_d \,\|_{A^{-1}} \|\nabla e\|_A  \dt \leq 
  \Int_0^T \Bigg( \frac{{\alpha}_2(t)}{2}  \| \R_d \,\|^2_{A^{-1}} + 
							    \frac{1}{2{\alpha}_2(t)} \|\nabla e\|^2_A \Bigg) \dt.
\label{eq:young-fenchel-for-decomposed-domain-i-3}
\end{multline}
Therefore, 
(\ref{eq:young-fenchel-for-decomposed-domain-i-1})--(\ref{eq:young-fenchel-for-decomposed-domain-i-3}) 
yield the estimate (\ref{eq:majorant-decomposed-i}).
\end{proof}

\subsection{Two sided estimates for combined norms}
\label{ssc:combined-norm}

In modern numerical methods (e.g., in various mixed finite element schemes) the 
approximations are generated for both primal and dual components of the solution. We 
note that this concept is perfectly motivated by physical arguments because primal 
and dual components often reflect physically meaningful parts of the solution. By 
following this idea, we now consider the solution of 
(\ref{eq:parabolic-problem-equation})--(\ref{eq:parabolic-problem-robin-boundary-condition})
as a pair $(u, p) \in V^{1, 0}_2 (Q_T) \times Y^*_{\dvrg}(Q_T)$. In order to 
measure the deviation of the approximation 
$(v, y) \in \Ho{1} (Q_T) \times Y^*_{\dvrg}(Q_T)$ from $(u, p)$, we use the combined 
primal-dual norm
\begin{multline}
\left \| [(u, p) - (v, y)] \right \|^2_{(\check{\nu}, \check{\theta}, \check{\zeta}, \check{\chi})} := 
\check{\nu}    \NormA{ \, \nabla{e}} + 
\check{\theta} \NormAinverse{ \, (y - p) } +  \\
\check{\zeta}  \left \| \, \dvrg (p - y) - (u - v)_t \right\|^2_{Q_T} + 
\check{\chi}   \| \, e (\cdot, T) \|^2_{\Omega}.
\label{eq:combined-norm}
\end{multline}
Let $\lambda = 0$, $S_N = S_D$, and $\varphi(x) = v(x, 0)$. Then, from Theorem 
\ref{th:theorem-minimum-of-majorant-I} (with $\beta = {\rm const}$, $\delta = 1$, and 
$\mu = 0$) the estimate can be written in the form

\begin{multline}
	\NormA{\nabla e} + \| e (\cdot, T) \|^2_{\Omega} \leq 
	\maj \, := \\ 
	(1 + \beta) \NormAinverse{  y - A \nabla{v} }	 +
	                            \Bigg(1 + \frac{1}{\beta} \Bigg) \frac{\CF^2}{\nu_1} \| f - v_t + \dvrg y\|^2_{Q_T}.
	\label{eq:majorant-simplified-combined-norm}
\end{multline}
%
Since $p = A \nabla u$, we reform the right-hand side of 
(\ref{eq:majorant-simplified-combined-norm}) as follows:
\begin{multline}
\maj \, \leq 
(1 + \beta) \left( \NormA{\nabla{(u - v)}} + \NormAinverse{  y - p } \right) +  \\
	                            \hfill \Bigg(1 + \frac{1}{\beta} \Bigg) \frac{\CF^2}{\nu_1} \| f - v_t + \dvrg y\|^2_{Q_T}.
\label{eq:majorant-estimated-by-dual-norm}
\end{multline}
By using (\ref{eq:parabolic-problem-equation}), we find that 
\begin{multline}
(1 + \beta) \! \left( \! \NormA{\nabla{(u - v)}} \! + \! \NormAinverse{  y - p } \right) + 
\left( \! \! 1 \! + \! \frac{1}{\beta} \! \right) \! \frac{\CF^2}{\nu_1} \| f - v_t + \dvrg y\|^2_{Q_T} \leq \quad\\
(1 + \beta) \left( \NormA{\nabla{(u - v)}} + \NormAinverse{  y - p } \right) +  
\left( \! \! 1 \! + \! \frac{1}{\beta} \! \right) 
\frac{\CF^2}{\nu_1} \| \dvrg (p - y) - (u - v)_t\|^2_{Q_T} + \\
\| e (\cdot, T) \|^2_{\Omega}  = 
\big \| [(u, p) - (v, y)] \big \|^2_{(\check{\nu}, \check{\theta}, \check{\zeta}, \check{\chi})},
\end{multline}
where 
$\check{\nu} = \check{\theta} = (1 + \beta)$, 
$\check{\zeta} = \left(1 + \frac{1}{\beta}\right) \frac{\CF^2}{\nu_1}$, and 
$\check{\chi} = 1$.
Next, by combining the first two terms, applying 
(\ref{eq:majorant-simplified-combined-norm}), and, finally, adding and subtracting 
$A \nabla v$ in the third term, we obtain 
\begin{alignat}{2}
& \big \|  [(u, p) - (v, y)] \big \|^2_{(\check{\nu}, \check{\theta}, \check{\zeta}, \check{\chi})} \leq 
\max{ \Big\{ 1, (1 + \beta) \Big \} } \Big( \| e (\cdot, T) \|^2_{\Omega} + \NormA{\nabla{(u - v)}} \Big) + \nonumber\\
& \qquad \quad \,
(1 + \beta) \NormAinverse{  y - A \nabla v + A \nabla v - p } + \Bigg(1 + \frac{1}{\beta} \Bigg) \frac{\CF^2}{\nu_1} \| f - v_t + \dvrg y \|^2_{Q_T} \leq \nonumber\\
& \max{  \Big\{ \! 1,  (1 + \beta) \! \Big \} } \left( 
(1  + \beta) \! \NormAinverse{  y - A \nabla{v} } + 
\left( 1 + \frac{1}{\beta} \right) \frac{\CF^2}{\nu_1} \| f - v_t +  \dvrg y\|^2_{Q_T} \right) \! +  \nonumber\\	
& ( 1 \! + \beta) \!\left( \! \NormAinverse{  y - A \nabla v}\! \! +  \NormAinverse{ A \nabla v - p }\! \! \right) \! + \! 
  \left( \! 1 + \frac{1}{\beta} \!\right) \! \frac{\CF^2}{\nu_1} \| f - v_t + \dvrg y  \|^2_{Q_T}.
\end{alignat}
Hence, we obtain the double inequality
\begin{equation}
\maj \!\leq \!
\big \| [(u, p) - (v, y)] \big \|^2 \! \leq \!
\bigg( \!\! \max{ \Big\{ \! 1, (1 + \beta) \! \Big \} } \! + \! \beta \! + \! 2 \! \bigg) \maj,
\label{eq:double-inequality}
\end{equation}
which shows that the majorant is equivalent to the combined primal-dual error norm. 
In other words, $\maj$ (which contains only known functions and parameters) adequately 
reflects the distance from $(v, y) \in \Ho{1}(Q_T) \times Y^*_{\dvrg}(Q_T)$ to the exact 
solution $(u, p)$. 
In particular, this means that if $(u_h, p_h)$ is the sequence of approximations
computed on a certain set of meshes $\mathcal{F}_h$, which converges to $(u, p)$
with the rate $h^\alpha$, then the values of the majorant
tend to zero with the same rate.


\section{An advanced form of the majorant}
\label{sec:majorant-II}

\begin{theorem}
\label{th:theorem-minimum-of-majorant-II}
(i) For any $v , w \in \Ho{1}(Q_T)$ and $y \in Y_{\dvrg}^*(Q_T)$ the 
following estimate holds:
\begin{multline}
	(2 - \delta)\NormA{\nabla e} + 
	\Bigg( 2 - \frac1\gamma \Bigg) \: \Big \| \sqrt{\lambda} e \Big \|^2_{Q_T} +  
	\Bigg( 1 - \frac1\epsilon \Bigg) \: \| e(\cdot, T) \|^2_{\Omega} + 
	2 \Big \| \sqrt{\sigma}\, e \Big \|^2_{S_R} =: \\
	[ e ]^2_{({\nu},\, {\theta},\, {\zeta}, \,  {\chi})} \leq 
	\majtwo (v, y, w; \delta, \epsilon, \gamma, \mu) := 
	\epsilon \| w(\cdot, T)\|^2_{\Omega} + 
	2L(v, w) + l(w, v)\qquad \\
	\Int\limits_0^T \Bigg (
	\gamma \bigg\|\frac{\R_{1,\, \mu} (v, y, w)}{\sqrt{\lambda}} 
	 \bigg\|^2_{\Omega} + 
	\alpha_1(t) \frac{\CF^2}{\nu_1} \|\, \R_{1, 1 - \mu} (v, y, w)\|^2_{\Omega} + \\
	\qquad \qquad \qquad 
	\alpha_2(t) \| \R_2 (v, y, w) \|^2_{A^{-1}} + 
	\alpha_3(t) \frac{\Ctr^2}{\nu_1} \big\| \R_3 (v, y, w) \big\|^2_{\Gamma_R} 
	\Bigg ) \dt ,	  
\label{eq:majorant-2}
\end{multline}
where $\delta \in (0, 2]$, $\gamma\geq \frac12$, $\epsilon \geq 1$, and 
$\mu \in [0, 1]$,
\begin{equation}
L(v, w) = \Int_{Q_T} \Big( v_t \,w + A \nabla v \cdot \nabla w + \lambda v \,w  - f w \Big) \dxt -
	        \Int_{S_R} (g - \sigma v) \,w \dst, 
\label{eq:l-function}	    
\end{equation}
\begin{equation}
l(v, w) = \Int_\Omega |v(x, 0) - \varphi(x)|^2 - 2 w(x, 0) \big( \varphi(x) - v(0, x)\big) \dx, 
\label{eq:l-small-function}	    
\end{equation}
and
\begin{alignat}{2}
\R_1 (v, y, w) & := f - {(v + w)}_t - \lambda (v - w) + \dvrg y, \label{eq:r-1} \\
\R_{1,\, \mu} (v, y, w) & := \mu \,\R_1 (v, y, w), \quad \R_{1, 1 - \mu} (v, y, w) := (1 - \mu) \, \R_1 (v, y, w), \label{eq:r-1-mu} \\
\R_2 (v, y, w) & := y - A \nabla{(v - w)}, \label{eq:r-2} \\ 
\R_3 (v, y, w) & := g - \sigma (v - w) - y \cdot n,
\end{alignat}
%
${\nu} = 2 - \delta$,
${\theta} = \sqrt{\left(2 - \frac1\gamma\right) \lambda }$, 
${\zeta} = 1 - \frac1\epsilon$,
${\chi} = 2$ are positive weights, and 
$\alpha_1(t)$, $\alpha_2(t)$, and $\alpha_3(t)$ are positive 
function satisfying (\ref{eq:alpha-relation}).

\noindent(ii) For any $\delta \in (0, 2]$, $\gamma\geq \frac12$, $\epsilon \geq 1$, 
and $\mu \in [0, 1]$ the lower bound of the variation problem 
\begin{equation}
\inf\limits_{
\begin{array}{c}
v, w \in \Ho{1}(Q_T)\\
y \in Y^*_{\dvrg}(Q_T)
\end{array}
}
\majtwo (v, y, w; \delta, \epsilon, \gamma, \mu)
\label{eq:inf-maj-II}
\end{equation}
is zero, and it is attained if and only if $v = u$, $y = A \nabla u$, and $w = 0$.
\end{theorem}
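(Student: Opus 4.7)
The plan is to mimic the proof of Theorem \ref{th:theorem-minimum-of-majorant-I}, but with a systematic introduction of the auxiliary function $w$. I would begin from the residual integral identity for $e = u - v$ and insert \emph{two} different test functions. The choice $\eta = e$ recovers the energy balance \eqref{eq:energy-balance-equation}, while the choice $\eta = w$ produces, after recognising that the resulting right-hand side equals $-L(v,w)$ (this in turn uses the weak form for $u$ tested against $w$, which yields $L(u,w) = 0$ after one integration by parts in time), the auxiliary identity
\[
\int_\Omega \bigl(e(T)w(T) - e(0)w(0)\bigr)\dx - \int_{Q_T} e\, w_t \dxt + \int_{Q_T} A\nabla e\cdot\nabla w \dxt + \int_{S_R}\sigma ew \dst + \int_{Q_T}\lambda ew \dxt = -L(v, w),
\]
with $L$ as defined in \eqref{eq:l-function}.

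Next I would exploit the algebraic identities $\R_f = \R_1 + w_t - \lambda w$, $\R_d = \R_2 - A\nabla w$, and $\R_b = \R_3 - \sigma w$, which follow directly by comparing \eqref{eq:r-1}--\eqref{eq:r-2} with the definitions of $\R_f, \R_d, \R_b$ in Theorem \ref{th:theorem-minimum-of-majorant-I}. Multiplying \eqref{eq:energy-balance-equation} by $2$, substituting these relations into $\I_f, \I_d, \I_b$, and using the auxiliary identity to eliminate the resulting $2\int_{Q_T} e\,w_t \dxt$ term, the cross-products $2\int A\nabla e\cdot\nabla w$, $2\int\lambda ew$ and $2\int\sigma ew$ cancel pairwise (symmetry of $A$ is essential for the first), leaving the clean identity
\[
\|e(T)\|^2_\Omega - 2\!\int_\Omega e(T)w(T)\dx + 2\NormA{\nabla e} + 2\|\sqrt\sigma e\|^2_{S_R} + 2\|\sqrt\lambda e\|^2_{Q_T} = 2\!\int_{Q_T}\!\R_1 e\dxt + 2\!\int_{Q_T}\!\R_2\cdot\nabla e\dxt + 2\!\int_{S_R}\!\R_3 e\dst + 2L(v,w) + \|e(0)\|^2_\Omega - 2\!\int_\Omega e(0)w(0)\dx.
\]

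Applying Young's inequality $-2\int_\Omega e(T)w(T) \ge -\tfrac{1}{\epsilon}\|e(T)\|^2_\Omega - \epsilon\|w(T)\|^2_\Omega$ produces the coefficient $(1 - 1/\epsilon)$ on the terminal term, and the elementary identity $\|e(0)\|^2_\Omega - 2\int_\Omega e(0)w(0)\dx = l(v,w)$, obtained from \eqref{eq:l-small-function} via $e(0) = \varphi - v(0)$, identifies the initial contribution. The three residual integrals are then handled verbatim as in Theorem \ref{th:theorem-minimum-of-majorant-I}: splitting $\R_1 = \R_{1,\mu} + \R_{1, 1-\mu}$, applying H\"older with the Friedrichs inequality \eqref{eq:friedrichs} and the trace inequality, and then Young--Fenchel with weights $\gamma, \alpha_1(t), \alpha_2(t), \alpha_3(t)$ satisfying \eqref{eq:alpha-relation}, absorbs $\delta\NormA{\nabla e}$ and $(1/\gamma)\|\sqrt\lambda e\|^2_{Q_T}$ into the left-hand side, yielding \eqref{eq:majorant-2}.

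For part (ii), direct substitution of $v = u$, $y = A\nabla u$, $w = 0$ makes every residual vanish pointwise, gives $w(\cdot, T) = 0$, $L(u, 0) = 0$ (trivially) and $l = 0$ (since $v(0) = \varphi$), hence $\majtwo = 0$. Conversely, restricting first to strict parameter ranges $\delta \in (0,2)$, $\gamma > \tfrac12$, $\epsilon > 1$ (the boundary cases following by continuity), the upper estimate forces the left-hand side of \eqref{eq:majorant-2} to vanish, so $e \equiv 0$ and $v = u$. The vanishing of the squared residual integrands then shows that $w$ satisfies a homogeneous reaction-diffusion problem with homogeneous Dirichlet/Robin data and terminal condition $w(\cdot, T) = 0$; the time reversal $\widetilde{w}(t) := w(T-t)$ converts this into a forward IBVP with zero initial data, whose unique solution is $\widetilde{w} \equiv 0$, whence $w \equiv 0$, and then $\R_2 = 0$ gives $y = A\nabla u$. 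The main obstacle is the careful cross-term cancellation in the combination step, where symmetry of $A$ and precise sign bookkeeping in $\R_1, \R_2, \R_3$ are essential; the backward-uniqueness argument in (ii) is otherwise standard given sufficient regularity of the data.
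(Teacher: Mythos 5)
Your argument is correct and follows essentially the same route as the paper: the identity you obtain by testing the error equation with $\eta=w$ and cancelling the cross terms (symmetry of $A$ included) is exactly the paper's rewriting of the term $\I_3$ via $L(v,w)$ in (\ref{eq:decomposition-for-majorant-II})--(\ref{eq:decomposition-for-majorant-II-2}), and the subsequent H\"older and Young--Fenchel steps coincide with (\ref{eq:ineq-young-fenchel-1})--(\ref{eq:ineq-young-fenchel-2}). For part (ii) the paper merely refers back to Theorem \ref{th:theorem-minimum-of-majorant-I}; your two-step argument (first deduce $e=0$ from the error bound so that $L$ and $l$ vanish and the remaining majorant terms are nonnegative squares, then backward uniqueness for $w$) is a correct and somewhat more careful filling-in of that omitted detail.
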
 
%
\noindent
{\bf Proof:} 
(i) We rewrite the right-hand side of (\ref{eq:energy-balance-equation}) by inserting 
functions $w \in \Ho{1}(Q_T)$ and $y \in Y_{\dvrg}^*(Q_T)$, which implies the following 
relation
\begin{multline}
	\frac12 \| e(\cdot, T) \|^2_{\Omega} + \NormA{\nabla{e}} + 
	\Int_{S_R} \sigma e^2 \dst + \Int_{Q_T} \lambda e^2 \dxt = \\
	\I_1 + \I_2 + \I_3 + \Int_{S_R} ( g - \sigma v - y \cdot n) e \dst + 
	\frac12 \| e(\cdot, 0) \|^2_{\Omega} \,,
	\label{eq:decomposition-for-majorant-II}
\end{multline}
where 
\begin{equation}
	\I_1 = \! \! \Int_{Q_T} \! \! \R_1 \,e \dxt, \;
	\I_2 = \! \! \Int_{Q_T} \! \! \R_2 \cdot \nabla e \dxt,\; 
	\I_3 = \! \! \Int_{Q_T} \! \! \big( (w_t \! - \! \lambda)e \! - 
	                                    \!A \nabla{w} \cdot \nabla{e} \big) \! \dxt. 
\end{equation}
%
%
The term $\I_3$ can be rewritten as
\begin{equation}
\I_3 = L(v, w) + \Int_{\Omega}\Big( e(x, T)w(x, T) - e(x, 0)w(x, 0) \Big)\dx + 
\Int_{S_R} \sigma w \, e \dst.
\label{eq:i-3-term-rewritten}	    
\end{equation}
By combining (\ref{eq:decomposition-for-majorant-II}) and (\ref{eq:i-3-term-rewritten}), 
we obtain
\begin{multline}
	\frac12 \| e(\cdot, T) \|^2_{\Omega} + \NormA{\nabla{e}} + 
	\Int_{S_R} \sigma e^2 \dst + \Int_{Q_T} \lambda e^2 \dxt = 
	\I_1 + \I_2 + L(v, w) + \\
	\Int_{S_R} \R_3 \, e \dst + 
	\Int_{\Omega} e(x, T)w(x, T) \dx + \Int_{\Omega} \Bigg(\frac12 e^2(x, 0) - e(x, 0)w(x, 0) \Bigg)\dx\,,
	\label{eq:decomposition-for-majorant-II-2}
\end{multline}
Using the same technique as in Section \ref{sec:majorant-I}, the right-hand side of 
(\ref{eq:decomposition-for-majorant-II-2}) can be estimated the following way:
\begin{alignat}{2}
	\Int_\Omega e(x, T) \,w(x, T)  \dxt & \leq  
	\frac{1}{2\epsilon} \| e(\cdot, T)\|^2_{\Omega} + 
	\frac{\epsilon}{2} \| w(\cdot, T)\|^2_{\Omega}, \label{eq:ineq-young-fenchel-1}\\
	\Int\limits_0^T \bigg \| \frac{\R_{1,\, \mu}}{\sqrt{\lambda}} \, \bigg\|_{\Omega}  
		                    \big  \|\sqrt{\lambda} \, e \big \|_{\Omega} \dt & \leq  
	\Int_0^T \Bigg ( \frac\gamma2 \Bigg\| \frac{\R_{1,\, \mu}}{\sqrt{\lambda}}  \bigg\|^2_{\Omega} + 
					\frac{1}{2\gamma} \big \|\sqrt{\lambda} e \big \|^2_{\Omega} \Bigg ) \dt, \\
  \! \Int_0^T \frac{\CF}{\sqrt{\nu_1}} \bigg\| \R_{1, 1 - \mu} \bigg\|_{\Omega} \|\nabla e\|_A \!\dt & \leq 
	\! \Int_0^T \!\! \Bigg( \!\! \frac{\alpha_1(t)}{2} \frac{\CF^2}{\nu_1} \bigg\|\R_{1, 1 - \mu} \bigg\|^2_{ \Omega} \! + 
								\frac{1}{2\alpha_1(t)}\|\nabla e\|^2_A \!\! \Bigg) \!\dt, \\
  \Int_0^T \! \| \R_2 \|_{A^{-1}} \|\nabla e\|_A  \dt & \leq \Int_0^T \Bigg(\frac{\alpha_2(t)}{2} \| \R_2 \|^2_{A^{-1}} + 
							                \frac{1}{2\alpha_2(t)} \|\nabla{ e }\|^2_A \Bigg) \dt, \\
  \Int_0^T \big\| \R_3 \big\|_{\Gamma_R} \Ctr \|\nabla e\|_A  & \leq  
	\Int\limits_0^T \Bigg(\frac{\alpha_3(t)}{2}  \frac{\Ctr^2}{\nu_1} 
	   \| \R_3 \|^2_{\Gamma_R} + \frac{1}{2\alpha_3(t)} \|\nabla e\|^2_A \Bigg) \dt,
\label{eq:ineq-young-fenchel-2}
\end{alignat}
%
where $\gamma \geq 1$, $\epsilon \geq 1$, and $\alpha_1(t)$, 
$\alpha_2(t)$, and $\alpha_3(t)$ are functions satisfying 
(\ref{eq:alpha-relation}). Thus, by combination of
(\ref{eq:ineq-young-fenchel-1})--(\ref{eq:ineq-young-fenchel-2}),
we obtain the required estimate (\ref{eq:majorant-2}).

\vskip 3pt

\noindent (ii) This item is proven by the same arguments as in Theorem 
\ref{th:theorem-minimum-of-majorant-I}.
\hfill $\square$
\vskip5pt

\subsection{An advanced majorant based upon domain decomposition}

Now, we deduce an advanced versions of the estimates (\ref{eq:majorant-decomposed-ii}) 
and (\ref{eq:majorant-decomposed-i}). Let (\ref{eq:omega-representation}) hold. 
First, we consider the case where $\lambda$ is not small (or zero). Assume 
(for the sake of simplicity only) that $S_T = S_D$. Then, we have the following result. 

\begin{theorem}
\label{th:theorem-decomposed-domain-majorant-II-1}
For any $v , w \in \Ho{1}(Q_T)$ and $y \in Y_{\dvrg}^*(Q_T)$ we obtain the estimate 
\begin{multline*}
	(2 - {\delta})\NormA{\nabla e} \! + 
	\Bigg( \! 2 - \frac{1}{{\rho}_1} - \frac{1}{{\rho}_2} \! \Bigg) \Big \| \sqrt{\lambda} e  \Big \|^2_{Q_T} \! + 
		\Bigg( \! 1 - \frac{1}{{\epsilon}} \! \Bigg) \: \| e(\cdot, T) \|^2_{\Omega} + 
	2 \Big \| \sqrt{\sigma}\, e \Big \|^2_{S_R} \! =: \\
	[ e ]^2_{({\nu},\, {\theta},\, {\zeta}, \,  {\chi})} \leq 
	\majtwod := 
	{\epsilon} \| w(x, T)\|^2_{\Omega} + 
	2L(v, w) + l(v, w) + \\
	\quad \Int\limits_0^T \! \! \left ( \!
	{\rho}_1 \bigg\|\frac{\R_{1,\, \mu} (v, y ,w)}{\sqrt{\lambda}} \bigg\|^2_{\Omega} \! \! + 
	{\rho}_2 {R}^2_{\mathrm{II}, 1}(t) \! + 
	{\alpha}_1(t) {R}^2_{\mathrm{II}, 2}(t)\! + 
	{\alpha}_2(t) \| \R_2 (v, y ,w)\|^2_{A^{-1}} 
	\right ) \! \! \dt,
\end{multline*}
where ${\delta} \in (0, 2]$, 
${\rho}_1 \geq \dfrac{1}{2 - \frac{1}{{\rho}_2}} $, 
${\epsilon} \geq 1$, and 
${\mu} \in [0, 1]$, 
$\R_{1,\, \mu} (v, y ,w)$ and $\R_2 (v, y ,w)$ are defined by (\ref{eq:r-1-mu}) and 
(\ref{eq:r-2}), respectively, and
\begin{equation*}
{R}_{\mathrm{II}, 1} (t) := 
\sqrt{\Sum_{i = 1}^{N} \frac{|\Omega_i|}{\lambda_i} \left(\, \Mean{ \R_{1, 1 - \mu} \, }{\Omega_i} \,\right)^2 }, \;
{R}_{\mathrm{II}, 2} (t) := 
\sqrt{ \Sum_{i = 1}^{N}  \frac{\CPi^2}{\nu_1} \left\| \R_{1, 1 - \mu}  \right\|^2_{\Omega_i}} \,.
\end{equation*}
Here, $\lambda_i = \min\limits_{x \in \Omega_i} \lambda(x, t)$ for a.a. $t \in [0, T]$, 
and 
${\nu} = 2 - {\delta}$,
${\theta} = \sqrt{\left(2 - \frac{1}{{\rho_1}} - \frac{1}{{\rho_2}} \right) \lambda}$, 
$\zeta = 1 - \frac{1}{{\epsilon}}$,
${\chi} = 2$, and 
${\alpha}_1(t)$, ${\alpha}_2(t)$ are 
positive functions satisfying the relation 
$\frac{1}{\alpha_1(t)} + \frac{1}{\alpha_2(t)} = \delta$.
\end{theorem}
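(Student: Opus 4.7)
The plan is to combine the two techniques already established: the $w$-insertion trick from Theorem \ref{th:theorem-minimum-of-majorant-II} (which introduces the functional $L(v,w)$, the term $l(v,w)$, and the new residuals $\R_1,\R_2,\R_3$), and the subdomain-splitting argument from Theorem \ref{th:theorem-majorant-for-decomposed-domain-1} (which replaces $\CF$ by local Poincar\'e constants $\CPi$). Since by assumption $S_T=S_D$, all boundary terms involving $\R_3$ and $\sqrt{\sigma}e$ disappear, so only the volume residuals need to be controlled.

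Concretely, I would start from the identity (\ref{eq:decomposition-for-majorant-II-2}) stripped of its $S_R$ integrals, which reads
\begin{equation*}
\tfrac12 \| e(\cdot,T)\|^2_\Omega + \NormA{\nabla e} + \Int_{Q_T}\lambda e^2\dxt
= \I_1 + \I_2 + L(v,w) + \Int_\Omega e(x,T)w(x,T)\dx + \Int_\Omega\!\bigl(\tfrac12 e^2(x,0)-e(x,0)w(x,0)\bigr)\!\dx,
\end{equation*}
with $\I_1=\int_{Q_T}\R_1 e\,dxdt$ and $\I_2=\int_{Q_T}\R_2\cdot\nabla e\,dxdt$. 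The last two terms on the right are treated exactly as in part (i) of Theorem \ref{th:theorem-minimum-of-majorant-II}: bound $e(T)w(T)$ by Young--Fenchel with parameter $\epsilon\geq1$, absorbing $\tfrac1{2\epsilon}\|e(\cdot,T)\|^2_\Omega$ into the left-hand side and producing $\tfrac{\epsilon}{2}\|w(\cdot,T)\|^2_\Omega$; and complete the square $\tfrac12 e^2(x,0)-e(x,0)w(x,0)$ to recognize $\tfrac12 l(v,w)$ (using $e(x,0)=v(x,0)-\varphi(x)$).

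The key new step is the treatment of $\I_1$. Split it as in (\ref{eq:representation-of-if-with-mu}) into $\I_1^\mu$ and $\I_1^{1-\mu}$ using the decomposition $\R_1=\R_{1,\mu}+\R_{1,1-\mu}$. For $\I_1^\mu$ apply Cauchy--Schwarz with the weight $1/\sqrt\lambda$ and Young--Fenchel with parameter $\rho_1$, as in (\ref{eq:young-fenchel-for-decomposed-domain-2-ii-1}). For $\I_1^{1-\mu}$ reproduce the domain-decomposition argument of Theorem \ref{th:theorem-majorant-for-decomposed-domain-1}: on each $\Omega_i$ write $\R_{1,1-\mu}=\widetilde{\R}_{1,1-\mu}+\{|\R_{1,1-\mu}|\}_{\Omega_i}$, pair the mean-free part with $e$ using Poincar\'e's inequality (\ref{eq:poincare-inequality}) and (\ref{eq:operator-a}) to get the bound $\int_0^T R_{\mathrm{II},2}\|\nabla e\|_A\,dt$, and pair the mean-value part with $e$ using the local lower bound $\lambda_i\leq\lambda$ and Cauchy--Schwarz to get $\int_0^T R_{\mathrm{II},1}\|\sqrt{\lambda}e\|_\Omega\,dt$. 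Then apply Young--Fenchel with parameters $\rho_2$ and $\alpha_1(t)$, respectively. Finally, $\I_2$ is estimated in the standard way with $\alpha_2(t)$ as in the proof of Theorem \ref{th:theorem-minimum-of-majorant-I}.

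Collecting all the estimates and moving the $\|\nabla e\|_A^2$, $\|\sqrt\lambda e\|^2_{Q_T}$, and $\|e(\cdot,T)\|^2_\Omega$ contributions to the left-hand side yields, after multiplication by $2$, the coefficients $2-\delta$, $2-1/\rho_1-1/\rho_2$, and $1-1/\epsilon$ announced in the statement, together with the required right-hand side. The constraint $\frac{1}{\alpha_1}+\frac{1}{\alpha_2}=\delta$ ensures that the $\|\nabla e\|^2_A$ coefficient is precisely $2-\delta$, while the constraint $\rho_1\geq 1/(2-1/\rho_2)$ guarantees $2-1/\rho_1-1/\rho_2\geq 0$ so the reaction term on the left is non-negative and the measure $[e]^2_{(\nu,\theta,\zeta,\chi)}$ is well defined. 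The main obstacle is purely bookkeeping: tracking how the four tunable parameters $\delta,\rho_1,\rho_2,\epsilon$ interact with the two function parameters $\alpha_1(t),\alpha_2(t)$ so that every error-norm component on the left has a non-negative coefficient and every residual-norm component on the right matches the stated form.
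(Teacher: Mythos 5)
Your proposal is correct and follows exactly the route the paper intends: the paper omits this proof, stating only that it is obtained by combining the arguments of Theorems \ref{th:theorem-majorant-for-decomposed-domain-1} and \ref{th:theorem-majorant-for-decomposed-domain-2} (together with the $w$-insertion machinery of Theorem \ref{th:theorem-minimum-of-majorant-II}), and that is precisely the combination you carry out, with the parameter bookkeeping ($\rho_1$, $\rho_2$, $\alpha_1$, $\alpha_2$, $\epsilon$) matching the stated coefficients. No gaps.
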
 

\vskip 15pt
\noindent
For problems, in which $\lambda$ can attain small or zero values we deduce another 
estimate. Assume that
\begin{equation}
\Mean{ \R_{1, 1 - \mu} (v, y, w) }{\Omega_i} = 0, \quad i = 1, ..., N, \quad
\mbox{for a.a.}\; t \in [0, T].
\label{eq:mean-condition-maj-II}
\end{equation}
\begin{theorem}
\label{th:theorem-decomposed-domain-majorant-II-2}
(i) If (\ref{eq:mean-condition-maj-II}) holds,
then for $v , w \in \Ho{1}(Q_T)$ and $y \in Y_{\dvrg}^*(Q_T)$
\begin{multline*}
	(2 - {\delta})\NormA{\nabla e} + 
	\Bigg( 2 - \frac{1}{{\gamma}} \Bigg) \: \Big \| \sqrt{\lambda} e \Big \|^2_{Q_T} +  
	\Bigg( 1 - \frac{1}{{\epsilon}} \Bigg) \: \| e(\cdot, T) \|^2_{\Omega} + 
	2 \Big \| \sqrt{\sigma}\, e \Big \|^2_{S_R} =: \\
	[ e ]^2_{({\nu},\, {\theta},\, {\zeta}, \,  {\chi})} \leq 
	\majtwod  := 
	{\epsilon} \| w(x, T)\|^2_{\Omega} + 
	2L(v, w) + l(v, w) + \qquad \qquad \\
	\Int\limits_0^T \Bigg (
	\gamma \bigg\|\frac{\R_{1,\, \mu} (v, y, w)}{\sqrt{\lambda}} \bigg\|^2_{\Omega} + 
	{\alpha}_1(t) 
	{R}^2_{\mathrm{II}}(t) + 
	{\alpha}_2(t) \| \R_2 (v, y, w) \|^2_{A^{-1}} 
	\Bigg ) \dt,	  
\end{multline*}
where ${\delta} \in (0, 2]$, ${\gamma} \geq \frac12$, ${\epsilon} \geq 1$, and 
${\mu} \in [0, 1]$, 
$\R_{1,\, \mu} (v, y ,w)$ and $\R_2 (v, y ,w)$ are defined by (\ref{eq:r-1-mu}) and 
(\ref{eq:r-2}), respectively, and
\begin{equation}
{R}_{\mathrm{II}}(t) := \sqrt{ \Sum_{i = 1}^{N} \frac{\CPi^2}{\nu_1}  \left\|  \R_{1, 1 - \mu}  \right\|^2_{\Omega_i} \,},
\end{equation}
${\nu} = 2 - {\delta}$,
${\theta} = \sqrt{ \left( 2 - \frac{1}{{\gamma}} \right) \lambda}$, 
${\zeta} = 1 - \frac{1}{{\epsilon}}$,
${\chi} = 2$, and ${\alpha}_1(t)$, ${\alpha}_2(t)$ are 
positive functions satisfying the relation  
$\frac{1}{\alpha_1(t)} + \frac{1}{\alpha_2(t)} = \delta$.
\end{theorem}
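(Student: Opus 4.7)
The plan is to combine the auxiliary-function strategy of Theorem~\ref{th:theorem-minimum-of-majorant-II} with the domain-decomposition argument already used in the proof of Theorem~\ref{th:theorem-majorant-for-decomposed-domain-2}. The starting point will be the identity (\ref{eq:decomposition-for-majorant-II-2}), in which the right-hand side has already been organized into the three residual integrals $\I_1$, $\I_2$, $\I_3$, the Robin trace contribution, the $L(v,w)$ term, and the mixed time-slice terms involving $e(\cdot,0)$ and $e(\cdot,T)$. What changes compared with Theorem~\ref{th:theorem-minimum-of-majorant-II} is solely the way $\I_1$ is bounded.

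First I would split $\I_1 = \I_1^{\mu} + \I_1^{1-\mu}$ using $\R_1 = \R_{1,\mu} + \R_{1,1-\mu}$. The piece $\I_1^{\mu}$ is bounded exactly as in the proof of Theorem~\ref{th:theorem-minimum-of-majorant-II} by $\int_0^T \|\R_{1,\mu}/\sqrt{\lambda}\|_\Omega \,\|\sqrt{\lambda}\,e\|_\Omega \,\dt$, and then Young--Fenchel with weight $\gamma$ produces the first term of $\majtwod$. The piece $\I_1^{1-\mu}$ is the one that benefits from hypothesis (\ref{eq:mean-condition-maj-II}): since the mean of $\R_{1,1-\mu}$ on each $\Omega_i$ vanishes, one may subtract the mean of $e$ and write
\begin{equation*}
\I_1^{1-\mu} = \Int_0^T \Sum_{i=1}^N \Int_{\Omega_i} \R_{1,1-\mu}\, \widetilde{e}\,\dx\,\dt,
\end{equation*}
where $\widetilde{e}=e-\{|e|\}_{\Omega_i}$. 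Applying the Poincar\'e inequality (\ref{eq:poincare-inequality}) on each $\Omega_i$ followed by Cauchy--Schwarz in $i$ gives $\I_1^{1-\mu} \le \int_0^T R_{\mathrm{II}}(t)\,\|\nabla e\|_A\,\dt$, which is the whole point of the decomposition: the global Friedrichs constant $\CF$ is replaced by the collection of subdomain Poincar\'e constants $\CPi$, each of which is explicitly bounded via Payne--Weinberger by $\mathrm{diam}\,\Omega_i/\pi$ for convex $\Omega_i$.

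The remaining terms are handled verbatim as in Theorem~\ref{th:theorem-minimum-of-majorant-II}: $\I_2$ by Cauchy--Schwarz in the $A^{-1}$ metric followed by Young--Fenchel with weight $\alpha_2(t)$, the cross-term $\int_\Omega e(\cdot,T)\,w(\cdot,T)\,\dx$ by Young with parameter $\epsilon$, the $\I_3$-induced rewriting produces $L(v,w)$ exactly as in (\ref{eq:i-3-term-rewritten}), and the initial-data terms are absorbed into $l(v,w)$ by completing the square. Applying Young--Fenchel with weights $\alpha_1(t),\alpha_2(t)$ satisfying the constraint $\tfrac{1}{\alpha_1(t)}+\tfrac{1}{\alpha_2(t)}=\delta$ then hides the $\|\nabla e\|_A^2$ contributions on the left via the factor $2-\delta$. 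Collecting all terms yields the claimed bound. The only mildly subtle point is bookkeeping: since $\R_1$ itself depends on $w$, the hypothesis (\ref{eq:mean-condition-maj-II}) is a joint constraint on $(v,y,w,\mu)$, but because $y$ and $\mu$ remain free, it can be imposed without restricting the class of admissible $w$. I expect no other real obstacles, as everything else is a direct transcription of the estimates already carried out in the previous theorems.
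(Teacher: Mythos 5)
Your proposal is correct and follows exactly the route the paper intends: the paper omits this proof, remarking only that it is obtained by combining the arguments of Theorems \ref{th:theorem-majorant-for-decomposed-domain-1}--\ref{th:theorem-majorant-for-decomposed-domain-2} with those of Theorem \ref{th:theorem-minimum-of-majorant-II}, which is precisely what you carry out (splitting $\I_1$, using the zero-mean condition to apply the subdomain Poincar\'e inequalities to $\I_1^{1-\mu}$, and treating all remaining terms verbatim as in the advanced majorant). Your closing remark that (\ref{eq:mean-condition-maj-II}) is a joint constraint on $(v,y,w,\mu)$ but remains imposable because $y$ and $\mu$ are free is a correct and worthwhile observation.
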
 


Theorems \ref{th:theorem-decomposed-domain-majorant-II-1} and 
\ref{th:theorem-decomposed-domain-majorant-II-2} can be proven by combining arguments used 
in Theorems \ref{th:theorem-majorant-for-decomposed-domain-1} and 
\ref{th:theorem-majorant-for-decomposed-domain-2}. Since proofs do not contain 
principally new items, we omit these details.

\subsection{Equivalence of 
\texorpdfstring{$\boldsymbol{[ e ]^2_{({\nu},\, {\theta},\, {\zeta})}}$} 
\qquad \: and \texorpdfstring{$\boldsymbol{\majtwo}$} \qquad \:}

\label{ssc:equivalence-of-error-and-advanced-majorant}
We aim to show that the advanced form of the majorant does not lead to an uncontrollable
overestimation of the actual value of the norm 
(\ref{eq:energy-norm-for-reaction-diff-evolutionary-problem}). 
For this purpose, we estimate $\majtwo$ from above and show that this upper bound is 
equivalent to the error norm. Henceforth, we assume that $S_T = S_D$, 
$\beta = {\rm const}$ and $\mu = 0$. As before, these assumption are introduced for the 
sake of simplicity only. Similar estimates for the problems with mixed boundary conditions and 
variable coefficients can be deduced by arguments close to those presented below.

Assume that $y = A \nabla u \in Y^*_{\dvrg}(Q_T)$ and $w = u - v = e$, then 
\begin{alignat}{2}
	\R_1 (v, A \nabla u, e) & = f - {(v + e)}_t - \lambda (v - e) + \dvrg (A \nabla u) = 
	2 \lambda e, \nonumber  \\
	\R_2 (v, A \nabla u, e) & = A \nabla u - A \nabla{(v - e)} = 
	2 A \nabla e.
	\label{eq:r-1-r-2-changed}
\end{alignat}
The functional (\ref{eq:l-function}) can be represented as follows:
\begin{multline}
L(v, e) = \!\! \Int_{Q_T} \!\! \Big( \! v_t \,e + A \nabla v \cdot \nabla e + \lambda v \, e - f e \! \Big) \! \dxt  = \\
\Int_{Q_T} \Big( u_t e + A \nabla u \cdot \nabla e + \lambda u e  - f e \Big) \dxt 
- \Int_{Q_T} \big(  A \nabla e \cdot \nabla e + e_t e + \lambda e^2 \big) \dxt.
\label{eq:l-function-1}	    
\end{multline}
In view of (\ref{eq:parabolic-problem-equation}), the first term in the right-hand side of 
(\ref{eq:l-function-1}) vanishes, and we find that 
\begin{equation}    
L(v, e) = - \Int_{Q_T} \big(  A \nabla e \cdot \nabla e + e_t e + \lambda e^2 \big) \dxt.
\end{equation}    
Next, 
\begin{equation}
l(v, e) = \Int_\Omega \left( |v(x, 0) - \varphi(x)|^2 - 2 e(x, 0) \big( \varphi(x) - v(0, x)\big) \right)\dx = 
- \| e (x, 0)\|_{\Omega}^2.
\label{eq:l-small-function-1}	    
\end{equation}
Let $\frac{4(\beta + 1)}{\delta} = \wp$, then by means of (\ref{eq:e-et-relation}) and
(\ref{eq:l-small-function-1}), we obtain
the estimate
\begin{multline*}
	\majtwo \leq
	\left ( \wp - 2 \right) \NormA{\nabla e} + 
	\left ( \frac{\wp}{\beta} - 2 \right) \| \sqrt{\lambda} e\|^2_{Q_T} + 
	\epsilon \| e(\cdot, T)\|^2_{\Omega} - 2 \Int\limits_{Q_T} e_t e \dxt \leq \\
	\left ( \wp - 2 \right) \NormA{\nabla e} + 
	\left ( \frac{\wp}{\beta} - 2 \right) \| \sqrt{\lambda} e\|^2_{Q_T} + 
	(\epsilon - 1) \| e(\cdot, T)\|^2_{\Omega}.
\end{multline*}
By setting $\hat{\delta} = 2- \delta$, we have
\begin{multline*}
	\majtwo \leq
		\frac{2 \hat{\delta}}{\delta} \left (1 + \frac{2 \beta}{ \hat{\delta}}\right) \NormA{\nabla e} + 
		\frac{2 \hat{\delta}}{\delta} \left (1 + \frac{2}{\hat{\delta}\beta} \right) \| \sqrt{\lambda} e\|^2_{Q_T} + 
		(\epsilon - 1) \| e(\cdot, T)\|^2_{\Omega}.
\end{multline*}
Therefore, for any $v \in \Ho{1} (Q_T)$ we arrive at two-sided estimates
\begin{multline}
[ e ]^2_{(\hat{\nu},\, \hat{\theta},\, \hat{\zeta})} := \hat\delta \NormA{\nabla e} + \hat\gamma \Big \| \sqrt{\lambda} e \Big \|^2_{Q_T} + \hat\epsilon \: \| e(\cdot, T) \|^2_{\Omega} \leq 
\majtwo \leq \\
[ e ]^2_{(\tilde{\nu},\, \tilde{\theta},\, \tilde{\zeta})} :=
\tilde\delta \NormA{\nabla e} + \tilde\gamma \|\sqrt{\lambda} e\|^2_{Q_T} + \tilde\epsilon \| e(\cdot, T)\|^2_{\Omega} \leq 
\mathfrak{C} \, [ e ]^2_{(\hat{\nu},\, \hat{\theta},\, \hat{\zeta})},
\label{eq:majorant-2-estimate}
\end{multline}
%
where 
\begin{multline*}
\hat\gamma = 2, \quad 
\hat\epsilon = \dfrac{\epsilon - 1}{\epsilon} = \dfrac{\tilde\epsilon}{\epsilon}, \quad
\tilde\delta = \frac{2 \hat{\delta}}{\delta} \left( 1 +  \frac{2}{\hat\delta} \right), \quad 
\tilde\gamma =  \frac{2 \hat{\delta}}{\delta} \left (  1 + \frac{2}{\beta \hat{\delta}} \right), \quad 
\tilde\epsilon = \epsilon - 1,
\end{multline*}
and
\begin{equation*}
\mathfrak{C} = 
\max \left\{ 
\frac{2}{\delta} \left( \! 1 +  \frac{2}{\hat\delta} \right), \,
\frac{\hat{\delta}}{\delta} \left ( \! 1 + \frac{2}{\beta \hat{\delta}} \right), \,
\epsilon 
\right\}.
\end{equation*}
The  relation (\ref{eq:majorant-2-estimate}) shows that the quantity 
$\majtwo$ is equivalent to the energy type measure of the error.
This means that the advanced majorant reliably controls deviations from $u$ in
terms of the norm (\ref{eq:energy-norm-for-reaction-diff-evolutionary-problem}).


\section{A lower bound of the deviation from $\boldsymbol{u}$ }
\label{sec:minorant}

Computable minorants of the deviations from exact solutions of partial differential 
equations provide useful information, which allows us to judge on the quantity of the 
error majorants. For elliptic problems having an variational formulation, the minorant 
of the errors can be derived fairly easily by means of the variational arguments 
(see \cite{RepinDeGruyter2008, NeittaanmakiRepin2004}). Below, we derive minorants for 
the considered class of evolutionary problem with the help of a different technique. 


\begin{theorem}
\label{th:theorem-mininum-of-minorant}
Let $v, \: \eta \in \Ho{1}(Q_T)$, then, the following estimate holds:
\begin{multline}
	\Min (\eta, v; \kappa_i): = 
	\sup\limits_{\eta \in \Ho{1}} \Bigg \{  
	\Sum_{i = 1}^{5} G_{v, i}(\eta) + F_{fg\varphi}(\eta) \Bigg \} \leq 
        \error_{(\underline{\nu},\, \underline{\theta},\, \underline{\zeta}, \, \underline{\chi})} : = \\
	\frac{\kappa_1}{2} \NormA{\, \nabla e } + \Bigg \|  \sqrt{ \frac{\kappa_2 + \kappa_3 \lambda}{2}} \, e \, \Bigg \|^2_{Q_T} + 
	\frac{\kappa_4}{2} \| \, e(x, T) \|^2_{\Omega} + \frac{\kappa_5}{2} \| \sqrt{\sigma} e \|^2_{S_R},
	\label{eq:lower-estimate}
\end{multline}
where
\begin{alignat}{2}
G_{v, 1}(\nabla \eta) & = \Int_{Q_T} \Big( - \nabla \eta \cdot A \nabla v - \frac{1}{2 \kappa_1} |\nabla \eta|^2  \Big) \dxt, \quad \nonumber \\
G_{v, 2}(\eta_t) & = \Int_{Q_T} \Big( \eta_t v - \frac{1}{2 \kappa_2}|\eta_t|^2 \Big) \dxt, \quad \nonumber \\
G_{v, 3}(\eta) & = \Int_{Q_T} \lambda \Big( -  v \eta - \frac{1}{2 \kappa_3}|\eta|^2 \Big) \dxt, \quad \quad \nonumber \\
G_{v, 4}\big(\eta(x, T)\big) & = \bigintsss\limits_{\Omega} \bigg( - v(x, T) \eta(x, T) - \frac{1}{2 \kappa_4}|\eta(x, T)|^2 \bigg) \dx, \quad \nonumber \\
G_{v, 5}\big(\eta(s, t)\big) & = \Int_{S_R} \sigma \bigg( - v \eta - \frac{1}{2 \kappa_5 }|\eta|^2 \bigg) \dst, \quad 
\end{alignat}
and
\begin{alignat}{2}
F_{fg \varphi}(\eta) = \Int_{Q_T} f \eta \dxt + \Int_{S_R} g \eta \dst + \Int_{\Omega} \varphi(x) \eta(x, 0) \dx, 
\end{alignat}
where $\underline{\nu} = \frac{\kappa_1}{2}$, 
$\underline{\theta} = \sqrt{ \frac{\kappa_2 + \kappa_3 \lambda}{2} }$, 
$\underline{\zeta} = \frac{\kappa_4}{2}$, 
$\underline{\chi} = \frac{\kappa_5}{2}$, and 
$\kappa_1$, $\kappa_2$, $\kappa_3$, $\kappa_4$, $\kappa_5 > 0$.
\end{theorem}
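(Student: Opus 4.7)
The plan is to use the generalized statement (\ref{eq:generalized-statement}), tested with the auxiliary function $\eta$, to eliminate the data terms hidden in $F_{fg\varphi}(\eta)$. This transforms $\Sum_{i = 1}^{5} G_{v, i}(\eta) + F_{fg\varphi}(\eta)$ into a sum of bilinear pairings of the error $e=u-v$ with $\eta$ (or its derivatives and traces), minus the quadratic-in-$\eta$ penalty terms already built into each $G_{v,i}$. Once the expression has this structure, each grouping is of the form $(\text{pairing of }e\text{ and }\eta) - \tfrac{1}{2\kappa_i}(\text{quadratic in }\eta)$, which matches exactly the scalar Young inequality $ab - \tfrac{b^2}{2\kappa} \le \tfrac{\kappa}{2}a^2$. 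The resulting upper bound is independent of $\eta$, so taking the supremum yields (\ref{eq:lower-estimate}).

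First I would substitute $\eta$ into (\ref{eq:generalized-statement}) and use $u(\cdot,0)=\varphi$ to rewrite $F_{fg\varphi}(\eta)$ as
$$F_{fg\varphi}(\eta) = \Int_\Omega u(x,T)\eta(x,T)\dx - \IntQT u\,\eta_t \dxt + \IntQT A\nabla u\cdot\nabla\eta\dxt + \IntQT \lambda u \eta \dxt + \Int_{S_R}\sigma u \eta \dst.$$
Adding this to $\Sum_{i=1}^5 G_{v,i}(\eta)$, the $v$-linear parts of $G_{v,i}$ combine with the $u$-linear parts above to produce exactly the five pairings $\int A\nabla e\cdot\nabla\eta$, $-\int e\,\eta_t$, $\int \lambda e\,\eta$, $\int_\Omega e(x,T)\eta(x,T)\dx$, and $\int_{S_R}\sigma e\,\eta$.

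Next I would apply Young's inequality to each pairing separately. The gradient term gives $\int A\nabla e\cdot\nabla\eta\dxt - \tfrac{1}{2\kappa_1}\int|\nabla\eta|^2\dxt \le \tfrac{\kappa_1}{2}\NormA{\nabla e}$; the time-derivative pairing contributes $\tfrac{\kappa_2}{2}\|e\|^2_{Q_T}$; the reaction pairing $\tfrac{\kappa_3}{2}\|\sqrt{\lambda}\,e\|^2_{Q_T}$; the terminal-time pairing $\tfrac{\kappa_4}{2}\|e(\cdot,T)\|^2_\Omega$; and the Robin-boundary pairing $\tfrac{\kappa_5}{2}\|\sqrt{\sigma}\,e\|^2_{S_R}$. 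Combining the contributions from the time-derivative and reaction terms into the single weight $\tfrac{\kappa_2+\kappa_3\lambda}{2}$ produces precisely the measure $\error_{(\underline{\nu},\,\underline{\theta},\,\underline{\zeta},\,\underline{\chi})}$ on the right-hand side. Since this bound no longer depends on $\eta$, the supremum over $\eta\in\Ho{1}(Q_T)$ preserves the inequality and gives (\ref{eq:lower-estimate}).

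The main obstacle I anticipate is the first Young-type estimate: the penalty in $G_{v,1}$ is written as $\tfrac{1}{2\kappa_1}|\nabla\eta|^2$, whereas the natural pairing $\int A\nabla e\cdot\nabla\eta$ yields $\tfrac{\kappa_1}{2}\NormA{\nabla e} + \tfrac{1}{2\kappa_1}\int A\nabla\eta\cdot\nabla\eta\dxt$ via the decomposition $(A^{1/2}\nabla e)\cdot(A^{1/2}\nabla\eta)$. Reconciling these two forms amounts either to reading $|\nabla\eta|^2$ in $G_{v,1}$ as the $A$-weighted quantity or to absorbing the uniform-ellipticity constant $\nu_1$ from (\ref{eq:operator-a}) into $\kappa_1$ (a harmless rescaling, since $\kappa_1>0$ is free). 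Once this accounting is settled, the four remaining Young inequalities are straightforward, and the argument closes.
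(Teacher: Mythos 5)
Your proposal is correct and follows essentially the same route as the paper: the weak formulation (\ref{eq:generalized-statement}) converts $\Sum_{i=1}^{5} G_{v,i}(\eta)+F_{fg\varphi}(\eta)$ into a quadratic functional of $e$ and $\eta$ (the paper presents this identification after, rather than before, the Young-type estimates, but the content is identical), and each pairing is then bounded termwise by Young's inequality. The $A$-weighting issue you flag in the gradient term is a real imprecision present in the paper's own proof as well (the penalty $\frac{1}{2\kappa_1}|\nabla\eta|^2$ should be read as $\frac{1}{2\kappa_1}A\nabla\eta\cdot\nabla\eta$, or a spectral constant of $A$ absorbed into $\kappa_1$), and your proposed fix is adequate.
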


\begin{proof}
It is not difficult to see that       
\begin{multline}
	\sup\limits_{\eta \in \Ho{1}(Q_T)} 
	\Bigg \{
	\Int_{Q_T} \bigg(
	\nabla \eta \cdot A \nabla e - \frac{1}{2 \kappa_1} |\nabla \eta|^2 - 
        \eta_t e - \frac{1}{2 \kappa_2} |\eta_t|^2 + 
	\lambda \Big( e \eta - \frac{1}{2 \kappa_3} |\eta|^2 \Big) 
	\bigg) \dxt + \\
	\qquad \qquad \qquad \qquad
	\Int_{\Omega} \Big( e(x, T) \eta (x, T) - \frac{1}{2 \kappa_4} |\eta (x, T)|^2 \Big)\dx + 
	\Int_{S_R} \sigma \Big( e \eta  - \frac{1}{2 \kappa_5} |\eta|^2 \Big)\dst \Bigg \} \leq \nonumber \\
	\sup\limits_{\eta \in \Ho{1}(Q_T)} \Int_{Q_T} \bigg( \nabla \eta \cdot A \nabla e - \frac{1}{2 \kappa_1} |\nabla  \eta|^2 \bigg) \dxt + 
	\sup\limits_{\eta_t \in \Ho{1}(Q_T)} \Int_{Q_T} \Big( - \eta_t e - \frac{1}{2 \kappa_2} |\eta_t|^2 \Big) \dxt + \hfill\\
	\sup\limits_{\eta \in \Ho{1}(Q_T)} \Int_{Q_T} \! \! \lambda \Big( e \eta - \frac{1}{2 \kappa_3} |\eta|^2 \Big) \dxt + 
	\sup\limits_{\eta (x, T) \in \Ho{1}(\Omega)} \Int_{\Omega} \! \! \Big( e(x, T) \eta (x, T) - \frac{1}{2 \kappa_4} |\eta (x, T)|^2 \Big)\dx + \\
	\sup\limits_{\eta \in \H{\frac12}(S_R)} \Int_{S_R} \sigma \Big( e \eta  - \frac{1}{2 \kappa_5} |\eta|^2 \Big)\dst.
	\label{eq:quadratic-func-inequality}
\end{multline}
Since
\begin{alignat}{2}
\sup\limits_{\eta \in \Ho{1}(Q_T)}  \Bigg \{ \Int_{Q_T} \bigg( \nabla \eta \cdot A \nabla (u - v) - \frac{1}{2 \kappa_1} |\nabla \eta|^2 \bigg) \dxt  \Bigg \} & \leq 
\frac{\kappa_1}{2} \NormA{\nabla e } \:, \nonumber \\
\sup\limits_{\eta_t \in \Ho{1}(Q_T)}  \Bigg \{ \Int_{Q_T} \Big( - \eta_t e - \frac{1}{2 \kappa_2} |\eta_t|^2 \Big) \dxt \Bigg \} & \leq \frac{\kappa_2}{2} \| e \|^2_{Q_T}, \nonumber \\
\sup\limits_{\eta \in \Ho{1}(Q_T)}  \Bigg \{ \Int_{Q_T} \lambda \Big( e \eta - \frac{1}{2 \kappa_3} |\eta|^2 \Big) \dxt \Bigg \} & \leq \frac{\kappa_3}{2} \| \sqrt{\lambda} e \|^2_{Q_T}, \nonumber \\
\sup\limits_{\eta (x, T) \in \Ho{1}(\Omega)} \Bigg \{ \Int_{\Omega} \Big( e(x, T) \eta (x, T) - \frac{1}{2 \kappa_4} |\eta (x, T)|^2 \Big)\dx \Bigg \} & \leq \frac{\kappa_4}{2} \| e(x, T) \|^2_{\Omega}, \nonumber \\
\sup\limits_{\eta \in \H{\frac12}(S_R)} \Bigg \{ \Int_{S_R} \sigma \Big( e \eta  - \frac{1}{2 \kappa_5} |\eta|^2 \Big) \dst \Bigg \} & \leq \frac{\kappa_5}{2} \| \sqrt{\sigma} e \|^2_{S_R},
\end{alignat}
we find that from one hand 
\begin{multline}
	\sup\limits_{\eta \in \Ho{1}(Q_T)} 
	\Bigg \{
	\Int_{Q_T} \bigg(
	\nabla \eta \cdot A \nabla e - \frac{1}{2 \kappa_1} |\nabla \eta|^2 -
        \eta_t e - \frac{1}{2 \kappa_2} |\eta_t|^2  + \lambda \Big( e \eta - \frac{1}{2 \kappa_3} |\eta|^2 \Big) 
	\bigg) \dxt + \\
	\Int_{\Omega} \Big( e(x, T) \eta (x, T) - \frac{1}{2 \kappa_4} |\eta (x, T)|^2 \Big)\dx + 
	\Int_{S_R} \sigma \Big( e \eta  - \frac{1}{2 \kappa_5} |\eta|^2 \Big)\dst
	\Bigg \} \leq 
        \error_{(\underline{\nu},\, \underline{\theta},\, \underline{\zeta}, \, \underline{\chi})} : = \\
	\frac{\kappa_1}{2} \, \NormA{\nabla e } + \, \Bigg \| \,  \sqrt{ \frac{\kappa_2 + \kappa_3 \lambda}{2}}  \, e  \, \Bigg  \|^2_{Q_T} + 
	\frac{\kappa_4}{2} \, \| \, e(x, T) \, \|^2_{\Omega} + \frac{\kappa_4}{2} \| e(x, t) \|^2_{S_R} .
	\label{eq:quadratic-func-inequality-1}
\end{multline}
%
%
From another hand, (by using (\ref{eq:generalized-statement})) we see that for any $\eta$
the functional 
\begin{multline}
	\! \! \sup\limits_{\eta \in \Ho{1}(Q_T)} 
	\Bigg \{
	\Int_{Q_T} \! \!
  \bigg(
	\nabla \eta \cdot A \nabla e - \frac{1}{2 \kappa_1} |\nabla \eta|^2 
	- \eta_t e - \frac{1}{2 \kappa_2} |\eta_t|^2 + 
	\lambda \Big( e \eta - \frac{1}{2 \kappa_3} |\eta|^2 \Big) \bigg) \dxt + \hfill\\
	\IntO \Big( e(x, T) \eta (x, T) - \frac{1}{2 \kappa_4} |\eta (x, T)|^2 \Big)\dx + 
	\Int_{S_R} \sigma \Big( e \eta  - \frac{1}{2 \kappa_5} |\eta|^2 \Big)\dst \Bigg \} = \\
	\sup\limits_{\eta \in \Ho{1}(Q_T)} \Bigg \{  
	\Sum_{i = 1}^{5} G_{v, i} + F_{fg\,\varphi}(\eta) \Bigg \}
\end{multline}
generates the lower bound of the error norm defined in the right-hand side of the inequality
(\ref{eq:quadratic-func-inequality-1}).

\end{proof}


\end{document}